\documentclass[12pt, a4paper]{article}
\usepackage[utf8]{inputenc}
\usepackage[T1]{fontenc}
\usepackage{lmodern}
\usepackage{graphicx,color}
\usepackage[left=3cm,right=3cm,top=3cm,bottom=2cm]{geometry}
\usepackage{amsthm,amsfonts,amsmath, mathrsfs}
\bibliographystyle{plain}
\usepackage[colorlinks=true,linkcolor=blue,citecolor=magenta,hyperindex]{hyperref}
\usepackage{amssymb}
\usepackage{needspace}
\usepackage{verbatim}
\usepackage[all]{xy}
\usepackage{natbib}
\usepackage{titlesec}
\usepackage{tikz-cd}
\usepackage{float}
\usepackage{lipsum}
\usepackage{authblk}
\usepackage{mathtools}
\usepackage{subfig}
\baselineskip 0.54cm

\titleformat{\section}[runin]  
  {\normalfont\bfseries}       
  {\thesection.}               
  {1em}                        
  {}                           
  [.]                          

\titlespacing*{\section}
  {0pt}{1em}{1em} 

  \titleformat{\subsection}[runin]  
  {\normalfont\bfseries}       
  {\thesubsection.}              
  {1em}                      
  {}                          
  [.]                          

\titlespacing*{\subsection}
  {0pt}{1em}{1em}           

\title{\textbf{\Large Anomalous cw-expansive homeomorphisms on compact surfaces of higher genus}}

\author{\textsc{\small Alberto Sarmiento, Douglas Danton and Viviane Pardini Valério}}

\date{\small\today}

\begin{document}

\maketitle

\vspace{-1.5cm}

\newtheorem{teorema}{Theorem}[section]
\newtheorem{proposicao}{Proposition}[section]
\newtheorem{lema}{Lemma}[section]
\newtheorem{corolario}{Corollary}[section]
\newtheorem{observacao}{Remark}
\newtheorem{exemplo}{Example}[subsection]
\newtheorem{definicao}{Definition}[section]

\renewcommand{\abstractname}{}
\begin{abstract}
\noindent {\small \textsc{Abstract:} In this paper, we construct cw-expansive homeomorphisms on compact surfaces of genus $g \ge 0$ with a fixed point whose local stable set is connected but not locally connected. This provides an affirmative answer to question posed by Artigue \cite{artigueanomalous}. To achieve this, we generalize the construction from the example of Artigue, Pacifico and Vieitez \cite{artiguepacificovieitez}, obtaining examples of homeomorphisms on compact surfaces of genus $g \ge 2$ that are 2-expansive but not expansive. On the sphere and the torus, we construct new examples of $cw2$-expansive homeomorphisms that are not $N$-expansive for all $N\geq 1$.}
\end{abstract}

\vspace{0.06cm}

\section{Introduction} \label{preliminaries} 

The concept of expansiveness was introduced by Utz \cite{utz} in the 1950s. Examples of expansive systems include Anosov diffeomorphisms and Axiom A diffeomorphisms restricted to the non-wandering set. The pioneer of this subject is Mañe \cite{mane1979} in 1979. In \cite{hiraide,lewowicz} Hiraide and Lewowicz proved that every expansive homeomorphism of a compact surface is conjugate with a pseudo-Anosov diffeomorphism. In 1993, Kato \cite{katocw} introduced a generalization of expansivity, the notion of continuum-wise ($cw$) expansive homeomorphisms. The concept of $N$-expansiveness was introduced by Morales \cite{morales} and the concept of $cwN$-expansive was introduced by Artigue \cite{dentritations}. 

Recall that a homeomorphism  $f: X \longrightarrow X$, where $X$ is a compact metric space, is called expansive, if there exists $\alpha > 0$ such that for any distinct points $x,y \in X$,where the set $d(f^n(x),f^n(y)) > \alpha$ for some $n \in \mathbb{N}$. Equivalently, $\Gamma_{\alpha}(x) = \{x\}$ for all $x \in X$ where the set $\Gamma_{\alpha}(x) = \{y \in X ; d(f^k(x),f^k(y)) \le \alpha, \forall k \in \mathbb{Z} \}$ is called \textit{dynamic ball}. The homeomorphism $f$ is said $N$-expansive if there exists $\alpha > 0$ such that the cardinality of the dynamic ball $\sharp(\Gamma_{\alpha}(x)) \le N$ for all $x \in X$ (In this sense, an expansive homeomorphism is $1$-expansive). A homeomorphism $f$ is called $cw$-expansive (\textit{continuum-wise} expansivity) if there exists $\alpha > 0$ such that for every continnum $C \subset X$, if $diam(f^k(C)) \leq \alpha$ for all $k \in \mathbb{Z}$, then $C$ is a singleton. A homeomorphism $f$ is $cwN$-expansive if there is $\alpha > 0$ such that if $A$, $B\subset X$ are continua,  $diam(f^k(A)) \leq \alpha$ for all $k \geq 0$ and  $diam(f^k(B)) \leq \alpha$ for all $k \leq 0$ then $\sharp(A\cap B)\leq N$.  In this situation, if $\sharp(A \cap B)$ is finite (independent of $N$), we say that $f$ is $cw_F$-expansive (where the $F$ means finite).

The motivation for this paper arose from Artigue's article \cite{artigueanomalous}, in which the following question is posed: Does every compact surface admit a cw-expansive homeomorphism with a connected but non-locally connected stable set? We answer this question affirmatively in Theorem (\ref{thprincipal}). The following diagram summarizes the definitions and presents a hierarchical structure of expansiveness, analogous to Table 1 in Artigue \cite{dentritations}, p. 5.

 $\small \xymatrix{
*+[F-:<3pt>]{\text{Exp}}  \ar@{<->}[r]  & *+[F-:<3pt>]{\text{$1$-exp}} \ar@/^0.8cm/[r] \ar[dd] & *+[F-:<3pt>]{\text{$2$-exp}} \ar@/^0.8cm/[r]  \ar[dd] & \dots \ar@/^0.8cm/[r] \ar[dd] & *+[F-:<3pt>]{\text{$N$-exp}} \ar[dd] \\
 & & & \\
 & *+[F-:<3pt>]{\text{$cw1$-exp}} \ar@/_0.8cm/[r] & *+[F-:<3pt>]{\text{$cw2$-exp}} \ar@/_0.8cm/[r] & \dots \ar@/_0.8cm/[r] & *+[F-:<3pt>]{\text{$cwN$-exp}} \ar[r]  & *+[F-:<3pt>]{\text{$cw_F$-exp}} \ar[r] & *+[F-:<3pt>]{\text{$cw$-exp}} }$

\vspace{1.2cm}

For the proof of this theorem, in Subsection (\ref{Gluing DA-plugs}), we generalize the construction of the example introduced by Smale \cite{smale1967}, namely Derived-from-Anosov diﬀeomorphism or the DA-diﬀeomorphism Theorem according Robinson \cite{robinsonbook}. To do this, we construct a stable DA-plug and an unstable DA-plug (Subsection \ref{DA-plug}). We proceed to glue a stable DA-plug with another unstable DA-plug, following the ideas of Artigue, Pacifico, and Vieitez \cite{artiguepacificovieitez}. We recall this construction and its properties in terms of gluing plugs. In Section (\ref{secaomainresults}) we show families of new examples on compact surfaces.

\begin{teorema}\label{2expansivos}
There exist 2-expansive homeomorphisms on compact surfaces of genus greater than or equal to 2 which are not expansive.
\end{teorema}

Using the same technique of gluing DA-plugs onto the  pseudo-Anosov diffeomorphism with 1-prongs on the sphere, such an example already appears in P. Walters (Example 1, p. 140), we were able to construct cw2-expansive examples on the sphere with an attracting set that are non-transitive (the set of non-wandering points is a proper subset of the surface) (Proposition \ref{cw2expansivos-0}). We also obtain examples on the torus that are cw2-expansive homeomorphisms with 1-prongs and are non-transitive (Proposition \ref{cw2expansivos-1}).

Finally, we defined the anomalous plug as in Artigue  \cite{artigueanomalous} and were able to insert it into each of the previous examples, obtaining the following theorem.

\begin{teorema} \label{thprincipal} Every compact surface ($g \ge 0$) admits a cw-expansive homeomorphism with a connected stable set but non-locally connected.
\end{teorema}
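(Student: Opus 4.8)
The plan is to assemble the statement from the three families of examples already produced in Section~\ref{secaomainresults}, which between them realise every genus $g\ge 0$: the sphere is handled by Proposition~\ref{cw2expansivos-0}, the torus by Proposition~\ref{cw2expansivos-1}, and every surface of genus $g\ge 2$ by Theorem~\ref{2expansivos}. Each of these homeomorphisms is at least $cw2$-expansive (the genus $\ge 2$ ones are even $2$-expansive), hence in particular $cw$-expansive by the hierarchy in the diagram. So the only thing missing is to modify one of these homeomorphisms, on an arbitrarily small neighbourhood of a well-chosen fixed point, so as to create a connected but non-locally connected stable set without destroying $cw$-expansiveness.

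First I would fix, in each of the three model homeomorphisms, a fixed point $p$ that is suitable as a ``docking site'' for the anomalous plug: concretely, a fixed point coming from the DA-plug construction (a source/saddle of the inserted plug) or a $1$-prong fixed point of the underlying pseudo-Anosov map, whose local stable and unstable sets have the standard product-like local picture. Around such a $p$ one can choose a small closed disk $D$ with the property that $f$ behaves on $D$ like a linear (or pseudo-Anosov) saddle with the boundary circle transverse to the two invariant directions. Next I would excise the interior of $D$ and glue in the anomalous plug defined, following Artigue~\cite{artigueanomalous}, in the preceding subsection: this is a homeomorphism of a disk (or annulus) with a fixed point whose local stable set is connected but not locally connected (a ``comb''-type set accumulating on a whole arc of the stable direction), and which is built to agree on the boundary with the linear/pseudo-Anosov saddle model. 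Because the gluing is along a circle on which both sides already realise the same boundary dynamics, the result is again a homeomorphism of the same surface, and the global structure (attracting set, non-wandering set, the DA-plugs elsewhere) is untouched.

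The heart of the proof — and the step I expect to be the main obstacle — is verifying that the modified homeomorphism $\hat f$ is still $cw$-expansive. One must check that no nontrivial continuum $C$ can have $\mathrm{diam}(\hat f^k(C))$ small for all $k\in\mathbb Z$. Away from the plug this follows from the $cw$-expansiveness (indeed $2$- or $cw2$-expansiveness) of the original model, together with the fact that orbits eventually enter or leave a fixed neighbourhood of the plug transversally; the delicate case is a continuum that lingers inside the anomalous plug. Here the key point is that the ``anomalous'' (non-locally connected) part of the plug is contained in the local stable set of the plug's fixed point, so it is contracted under forward iteration and dilated under backward iteration; thus any continuum trapped with small diameter in both time directions must lie in the intersection of a shrinking stable piece and a shrinking unstable piece, forcing it to be a single point. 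Making this precise requires the explicit estimates on the anomalous plug (uniform contraction toward the stable arc, uniform expansion off it), exactly as in Artigue's original verification, now transported into the surface setting via the gluing.

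Finally I would record the two topological conclusions. Connectedness of the stable set $W^s(p)$ of the modified fixed point is immediate, since by construction the plug's local stable set is connected and it is carried onto larger and larger pieces of $W^s(p)$ by the backward iterates of $\hat f$ restricted to the stable side; a nested-connected-union argument gives connectedness of the whole stable set. Non-local-connectedness is inherited directly from the plug: there is a point of the local stable set — a point on the limiting arc that the ``comb teeth'' accumulate on — at which no small neighbourhood in $W^s_{\mathrm{loc}}(p)$ is connected, and this property is local so it survives the gluing and passes to $W^s(p)$. Running this argument with the sphere example, the torus example, and the genus $\ge 2$ example respectively yields the statement for all $g\ge 0$, completing the proof of Theorem~\ref{thprincipal}.
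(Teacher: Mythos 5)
Your proposal follows essentially the same route as the paper: take the sphere, torus, and genus~$\ge 2$ examples already constructed, surgically insert Artigue's anomalous plug near a fixed point of a stable DA-plug, and defer the delicate $cw$-expansiveness verification (a continuum lingering in the plug, or a stable arc containing an unstable circle arc in the gluing annulus) to the arguments of Artigue's Theorem~2.5. The only small divergence is that the paper docks the anomalous plug specifically at the saddle $p_1$ of the stable DA-plug, replacing a topological rectangle $\mathcal{R}_{p_1}$ along the connection from $p_0$ to $p_1$ --- not at a source or at a $1$-prong, whose local picture is not the product-like saddle model your gluing requires.
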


\section{Preliminary discussions} \label{preliminariesdiscussions}

In this Section, we review in Subsection (\ref{DA-plug}) the notion of DA-diffeomorphism and introduce the concepts of \textit{unstable DA-plug} and \textit{stable DA-plug}. In Subsection (\ref{Gluing DA-plugs}) we will proceed to glue two DA-plugs, always pairing one stable plug with one unstable plug. Finally, in Subsection (\ref{2-expansive}), we perform the connected sum of disjoint copies of $\mathbb{T}^2$, obtaining the bitorus and a surface homeomorphism.

First, we present some preliminary results relating the expansiveness properties of a homeomorphism to those of its iterates. It is known that the expansiveness property is preserved for a finite number of iterations, that is, the homeomorphism $f$ is expansive if and only if $f^k$ is expansive $(k \neq 0)$. The following result shows that this property holds true in the case of cw-expansive homeomorphisms defined on a compact metric space $X$.

\newpage

\begin{lema}\label{iteradacwexpansivos}
Let $f:X  \longrightarrow X$ be a homeomorphism. The following are equivalent:
\begin{enumerate}
    \item $f$ is $cw$-expansive;
    \item $f^k$ is $cw$-expansive for all $k\in {\mathbb{Z}}\setminus\{0\}$;
    \item $f^{i_0}$ is $cw$-expansive for some $i_0\in {\mathbb{Z}}\setminus\{0\}$.
\end{enumerate}
\end{lema}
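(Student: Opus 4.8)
The plan is to prove the cycle of implications $(1)\Rightarrow(2)\Rightarrow(3)\Rightarrow(1)$; in fact $(2)\Rightarrow(3)$ is trivial (take $i_0=k$ for any admissible $k$, e.g. $i_0=1$), so the substance lies in $(1)\Rightarrow(2)$ and $(3)\Rightarrow(1)$. Throughout I will use the elementary observation that a homeomorphism $g$ is $cw$-expansive with constant $\alpha$ if and only if every continuum $C$ with $g$-orbit of diameter $\le\alpha$ (meaning $\mathrm{diam}(g^n(C))\le\alpha$ for all $n\in\mathbb Z$) is a singleton; and that it suffices, when passing between $f$ and $f^k$, to control finitely many "intermediate" iterates $f^j$, $0\le |j|<|k|$, by uniform continuity.

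For $(1)\Rightarrow(2)$, fix $k\in\mathbb Z\setminus\{0\}$ and let $\alpha>0$ be a $cw$-expansive constant for $f$. Since $f$ is a homeomorphism of a compact metric space, each of the finitely many maps $f^{j}$ with $|j|<|k|$ is uniformly continuous, so there is $\delta\in(0,\alpha]$ such that $d(x,y)\le\delta$ implies $d(f^{j}(x),f^{j}(y))\le\alpha$ for all such $j$. I claim $\delta$ witnesses $cw$-expansiveness of $f^{k}$. Indeed, if $C$ is a continuum with $\mathrm{diam}((f^{k})^{m}(C))\le\delta$ for all $m\in\mathbb Z$, then for an arbitrary $n\in\mathbb Z$ write $n=mk+j$ with $|j|<|k|$; since $f^{j}$ maps the set $(f^{k})^{m}(C)$ (of diameter $\le\delta$) to a set of diameter $\le\alpha$, and $f^{n}(C)=f^{j}((f^{k})^{m}(C))$, we get $\mathrm{diam}(f^{n}(C))\le\alpha$ for all $n\in\mathbb Z$. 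By $cw$-expansiveness of $f$, $C$ is a singleton. (The case $k<0$ is identical, using that $f^{-1}$ is also uniformly continuous, or simply noting $(f^{k})^{m}=(f^{-k})^{-m}$.)

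For $(3)\Rightarrow(1)$, suppose $f^{i_{0}}$ is $cw$-expansive with constant $\beta>0$; set $k=|i_{0}|$, and note $f^{k}$ is then $cw$-expansive (if $i_0<0$ this is the equivalence of a map and its inverse being $cw$-expansive with the same constant, since the defining condition is symmetric in $n\leftrightarrow -n$). Let $C$ be a continuum with $\mathrm{diam}(f^{n}(C))\le\beta$ for all $n\in\mathbb Z$. Then in particular $\mathrm{diam}((f^{k})^{m}(C))=\mathrm{diam}(f^{mk}(C))\le\beta$ for all $m\in\mathbb Z$, so by $cw$-expansiveness of $f^{k}$ the continuum $C$ is a singleton; hence $\beta$ is a $cw$-expansive constant for $f$. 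This closes the cycle.

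I do not expect a genuine obstacle here: the only point requiring care is the bookkeeping with the intermediate iterates $f^{j}$ in $(1)\Rightarrow(2)$, and the observation — used silently above — that the $cw$-expansiveness condition is invariant under replacing $f$ by $f^{-1}$ (because quantifying $\mathrm{diam}(f^{n}(C))\le\alpha$ over all $n\in\mathbb Z$ is unchanged by $n\mapsto -n$), which lets us reduce every statement to positive powers. One should also remark that, unlike the two-sided $cw$-expansive case treated here, the analogous statement for $cwN$-expansiveness (whose defining condition treats forward and backward orbits asymmetrically) would require a separate argument; but that is not needed for this lemma.
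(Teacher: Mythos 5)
Your proof is correct and follows essentially the same route as the paper: $(1)\Rightarrow(2)$ by uniform continuity of the finitely many intermediate iterates $f^{j}$, $(2)\Rightarrow(3)$ trivially, and $(3)\Rightarrow(1)$ by restricting attention to the multiples of $i_{0}$. Your write-up is in fact slightly more careful than the paper's on two points the paper glosses over --- the explicit division $n=mk+j$ and the treatment of negative exponents via the symmetry $n\mapsto -n$ of the defining condition.
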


\begin{proof}
($1 \Rightarrow 2$) Fixed  $k_0\in {\mathbb{Z}}\setminus\{0,~ 1\}$. Let $c>0$ the cw-expasivity constant for $f$, i.e. if $diam(f^n(A))<c$ for some continuous $A$ and for all $n\in{\mathbb{Z}}$, then $\sharp (A)=1$. 

Since homeomorphisms $\{f, f^2, \cdots, f^{k_0} \}$ are uniformly continuous, give $\epsilon=c/2>0$, there exist $\delta>0$ such that if $d(x,y)<\delta$ then 
\begin{equation}\label{uniforme}
d(f^i(x),f^i(y))<c/2, ~\mbox{for all}~ i=1,2,\cdots , k_0-1. 
\end{equation}

We claim that $\overline{c}=\min\{c, \delta\}$ is the cw-expansivity constant for $f^{k_0}$. In fact, suposse that $diam((f^{k_0})^n(A))<\overline{c}$ for some continuous $A$ and for all $n\in{\mathbb{Z}}$, so  
\begin{equation}
diam(f^{nk_0}(A))<\overline{c},~~\forall n\in{\mathbb{Z}}
\end{equation}

Then $d(f^{nk_0}(x),f^{nk_0}(y))<\overline{c}~~ (<\delta)$ for all $x,y\in A$ and for all $n\in{\mathbb{Z}}$, by \eqref{uniforme} we have: 
 $$d(f^i(f^{nk_0}(x)),f^i(f^{nk_0}(y)))<c/2, \forall x,y\in A,~~\mbox{for all}~ i=0,1,2,\cdots , k_0-1.$$ 

Then $diam(f^{nk_0+i}(A))\leq c/2<c$. Then $\sharp (A)=1$

($2 \Rightarrow 3$) is obvious. ($3 \Rightarrow 1$) indeed, suppose that $diam((f)^n(A))<\overline{c}$ for some continuous $A$, for $c$ constant of $cw$ - expansivity of $f^{i_0}$ and for all $n\in{\mathbb{Z}}$. In particular, we have that for the iterated multiples of $i_0$, thus we conclude that $\sharp (A)=1$.
\end{proof}

From the definition of $cwN$-expansive and following the proof of the previous lemma, we have:

\begin{lema}\label{iteradacw2expansivos}
Let $f:X \longrightarrow X$ homeomorphism, then $f$ is cwN-expansive if and only if $f^k$ is cwN-expansive for all $k \in \mathbb{Z} \setminus {0}$.
\end{lema}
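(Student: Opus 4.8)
The plan is to follow the scheme of the proof of Lemma~\ref{iteradacwexpansivos}, adjusting it to the asymmetry of the $cwN$-expansivity condition, which imposes a forward diameter bound on one continuum and a backward bound on the other. The implication ``$f^{k}$ is $cwN$-expansive for all $k\neq 0$'' $\Rightarrow$ ``$f$ is $cwN$-expansive'' is immediate by taking $k=1$, so all the content is in the converse. Moreover, the defining condition of $cwN$-expansivity is invariant under exchanging the roles of $A$ and $B$ and simultaneously reversing time, so $f$ is $cwN$-expansive (with a given constant) if and only if $f^{-1}$ is; since $f^{-k_0}=(f^{-1})^{k_0}$, it therefore suffices to prove that ``$f$ is $cwN$-expansive $\Rightarrow$ $f^{k_0}$ is $cwN$-expansive'' for every integer $k_0>1$, the case of negative powers following by applying this to $f^{-1}$.

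Fix such a $k_0$ and let $c>0$ be a $cwN$-expansivity constant for $f$. Exactly as in the previous lemma, uniform continuity of the finitely many homeomorphisms $f^{\pm1},f^{\pm2},\dots,f^{\pm(k_0-1)}$ provides $\delta>0$ with
\begin{equation}\label{uniforme2}
d(x,y)<\delta \ \Longrightarrow\ d(f^{i}(x),f^{i}(y))<c/2 \qquad \text{for all } i \text{ with } 1\le |i|\le k_0-1 .
\end{equation}
I then claim that $\overline{c}=\min\{c,\delta\}$ is a $cwN$-expansivity constant for $f^{k_0}$. To see this, let $A,B\subset X$ be continua with $diam((f^{k_0})^{n}(A))\le\overline{c}$ for all $n\ge 0$ and $diam((f^{k_0})^{n}(B))\le\overline{c}$ for all $n\le 0$. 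For an arbitrary $m\ge 0$ write $m=nk_0+i$ with $n\ge 0$ and $0\le i\le k_0-1$; since $diam(f^{nk_0}(A))\le\overline{c}\le\delta$, estimate~\eqref{uniforme2} (together with the trivial case $i=0$, where one only uses $\overline{c}\le c$) yields $diam(f^{m}(A))\le c$. Symmetrically, for $m\le 0$ write $m=nk_0-i$ with $n\le 0$ and $0\le i\le k_0-1$, and the same estimate gives $diam(f^{m}(B))\le c$. Hence $A$ and $B$ fulfil the hypotheses defining $cwN$-expansivity of $f$ with the constant $c$, so $\sharp(A\cap B)\le N$, which proves the claim and the lemma.

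The remaining work --- the arithmetic of the decomposition $m=nk_0\pm i$ and the bookkeeping over the intermediate iterates --- is routine, and I do not expect any genuine obstacle: as the paper indicates, the statement is a direct transcription of the proof of Lemma~\ref{iteradacwexpansivos}. The only point that is not present there, and the one requiring a little care, is that the forward hypothesis on $A$ and the backward hypothesis on $B$ must be propagated over the missing iterates separately on the two sides; this is precisely why one needs uniform continuity of the negative iterates $f^{-i}$ as well as of the $f^{i}$, and why it is cleanest to dispose of negative powers at the outset by passing to $f^{-1}$.
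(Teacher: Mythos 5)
Your proof is correct and is essentially the argument the paper intends: the paper gives no separate proof of this lemma, simply asserting that it follows by the method of Lemma \ref{iteradacwexpansivos}, and your write-up is exactly that adaptation (uniform continuity of the intermediate iterates, $\overline{c}=\min\{c,\delta\}$, and the decomposition $m=nk_0\pm i$ applied separately to the forward continuum $A$ and the backward continuum $B$). The only remark worth making is that the negative iterates $f^{-i}$ are not strictly needed --- writing $m=nk_0+i$ with $n=\lfloor m/k_0\rfloor\le 0$ and $0\le i\le k_0-1$ handles the backward side with positive offsets alone --- but your variant via $f^{-1}$ is equally valid.
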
 

The previous lemma also holds in the case of $N$-expansive homeomorphism.

\subsection{DA-plug}\label{DA-plug} Before constructing our examples, it is necessary to recall the notion of DA-diffeomorphism in order to define what we shall call the \textit{unstable DA-plug} and \textit{stable DA-plug}, which will serve as fundamental components of our constructions. A Derived from Anosov diffeomorphism (denoted $f_{DA}$, or simply DA-diffeomorphism) is a diffeomorphism that exhibits dynamics similar to those of an Anosov system, but it is typically defined on manifolds where no genuine Anosov diffeomorphism exists.

More precisely, a DA-diffeomorphism is obtained by modifying an Anosov diffeomorphism in a small region of the manifold, thus producing a system that preserves many of the hyperbolic features, such as the existence of stable and unstable manifolds, but may introduce singularities or new types of invariant sets (for instance, saddle points).

One of the classical constructions consists in perturbing a linear Anosov diffeomorphism on the torus $\mathbb{T}^2$ to create a hyperbolic fixed point of saddle type while keeping the system structurally stable. These systems play an important role in the study of non-uniform hyperbolicity and robust dynamical properties. Further details regarding the construction can be found in Aoki-Hiraide \cite{aoki1994topological}, Katok-Hasselblatt \cite{katok1995introduction}, Palis-Melo \cite{palis1982geometric}, Robinson \cite{robinsonbook} and Shub \cite{shub1987global}.

\begin{teorema}[Robinson \cite{robinsonbook}\label{TeoremaRobinson}, $\S$ 7.8, p. $300$] The diffeomorphism $f_{DA}$ has non-wandering set $\Omega(f_{DA}) = \{p_0\} \cup \Lambda$, where $p_0$ is a source and $\Lambda$ is a hyperbolic expanding attactor of topological dimension one. The map $f_{DA}$ is transitive on $\Lambda$, and the periodic points are dense in $\Lambda$.
\end{teorema}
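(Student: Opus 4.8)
The plan is to recall the DA construction explicitly and then verify each assertion with standard hyperbolic-dynamics tools (cone fields, filtrations, the Anosov closing lemma).

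First I would fix the model. Let $A=\begin{pmatrix}2&1\\1&1\end{pmatrix}$ act on $\mathbb{T}^2=\mathbb{R}^2/\mathbb{Z}^2$, giving a linear Anosov diffeomorphism $f_A$ with a fixed point $p$ (the image of $0$), eigenvalues $\lambda>1>\lambda^{-1}>0$, and orthogonal eigenlines $E^u,E^s$ through $p$. In coordinates $(s,u)$ adapted to $E^s,E^u$ near $p$, I would modify $f_A$ inside a small disk $D_\rho$ centered at $p$ by composing it with the time-one map of a vector field supported in $D_\rho$ that points away from $p$ along the $s$-direction, with magnitude increasing in $|s|$ close to $p$; call the result $f_{DA}$. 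The modification is arranged so that (a) it is $C^0$-small and affects only the $\partial_s$-component, so the unstable cone field $\mathcal{C}^u$ of $f_A$ remains strictly $Df_{DA}$-invariant and strictly expanded on all of $\mathbb{T}^2$; (b) the contraction rate of $f_{DA}$ along the local stable segment through $p$ is pushed past $1$, turning $p$ into a source $p_0$; and (c) no new periodic orbit is created inside $D_\rho$ (automatic from the monotonicity in $s$). I would then exhibit a round neighborhood $U\ni p_0$ with $\overline{U}\subset D_\rho$ and $f_{DA}(\mathbb{T}^2\setminus U)\subset \mathbb{T}^2\setminus\overline{U}$, so that $R:=\mathbb{T}^2\setminus U$ is a trapping region.

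Set $\Lambda:=\bigcap_{n\ge 0}f_{DA}^n(R)$. This is a nonempty compact invariant attractor whose basin is $\mathbb{T}^2\setminus\{p_0\}$, since any orbit starting off the source $p_0$ eventually enters $R$ and never leaves. By (a), $\Lambda$ carries an invariant unstable cone field that is strictly expanded; for the complementary contraction one checks, choosing $D_\rho$ small, that after finitely many iterates every point of $\Lambda$ lies in the region where $Df_{DA}$ still contracts transversally to $\mathcal{C}^u$, so that in an adapted metric $\Lambda$ is a hyperbolic set with one-dimensional unstable bundle. Hence $\Lambda$ is a hyperbolic attractor with unstable fiber dimension $1$; since it cannot contain an open set (the stable direction is genuinely contracted), it is an expanding attractor of topological dimension $1$, locally an arc times a Cantor set. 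Finally, $\Lambda=\overline{W^u(q)}$ for any periodic $q\in\Lambda$, and $W^u(q)$ is dense because the $u$-direction was not altered and was already dense for $f_A$; this yields transitivity on $\Lambda$, while density of periodic orbits in $\Lambda$ follows from the Anosov closing lemma applied to the hyperbolic set $\Lambda$.

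The main obstacle is the hyperbolicity of $\Lambda$: although $f_{DA}$ is not hyperbolic near $p_0$, one must show that $\Lambda$ avoids the part of $D_\rho$ where the transverse contraction has been spoiled but which has not yet been ejected from $R$, so that uniform contraction transverse to $\mathcal{C}^u$ is recovered along $\Lambda$ in an adapted metric. The remaining points — the trapping region, the identification $\Omega(f_{DA})=\{p_0\}\cup\Lambda$, and the dimension count — are then routine filtration and cone-field bookkeeping, together with the observation that the wandering set fills the entire region between the repeller $p_0$ and the attractor $\Lambda$.
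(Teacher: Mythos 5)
The paper offers no proof of this statement at all: it is quoted directly from Robinson's book (\S 7.8), so the only thing to compare your attempt against is the standard DA construction, which your outline does follow in broad strokes. Within that outline there are two genuine problems. First, your claim (c) --- that no new periodic orbit is created inside $D_\rho$, ``automatic from the monotonicity in $s$'' --- is false, and unavoidably so. The perturbed stable segment through $p$ remains $f_{DA}$-invariant; on it $f_{DA}$ expands at $p_0$ but coincides with the contraction $f_A$ near the ends of $D_\rho$, so the graph of the restricted map must cross the diagonal at two new points, producing two saddle fixed points $p_1,p_2$ inside $D_\rho$. The conclusion $\Omega(f_{DA})=\{p_0\}\cup\Lambda$ survives only because $p_1,p_2\in\Lambda$; you cannot get it by asserting that the modification creates nothing new. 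This is not a cosmetic slip here: the existence of precisely these two saddles is what the paper's notion of a DA-plug is built on.

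Second, the transitivity argument does not work as written. For $q\in\Lambda$ one has $W^u(q,f_{DA})\subset\Lambda$, and $\Lambda$ is closed with empty interior, so $W^u(q,f_{DA})$ cannot be dense in $\mathbb{T}^2$; and its density \emph{in} $\Lambda$ does not follow from ``the $u$-direction was not altered,'' because the unstable foliation \emph{is} altered --- a perturbation supported in the stable component preserves the stable foliation leafwise but bends the unstable leaves. The standard repair is the semiconjugacy $h\circ f_{DA}=f_A\circ h$ with $h$ near the identity (shadowing/structural stability of $f_A$), which collapses the gaps $W^u(p_0)$ and carries unstable sets of $f_{DA}$ onto unstable leaves of $f_A$; density of $W^u(q,f_{DA})$ in $\Lambda$, transitivity, and density of periodic points then follow (the last also via the closing lemma, as you say). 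Your identification of the hyperbolicity of $\Lambda$ as the main obstacle, and the strategy of showing $\Lambda$ avoids the region where transverse contraction is spoiled, are sound --- but note that $\Lambda$ does meet $D_\rho$ (it contains $p_1,p_2$), so the relevant bad set is only the neighborhood of $p_0$ between the two saddles, and what must be shown is that this set lies in $W^u(p_0)=\mathbb{T}^2\setminus\Lambda$.
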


We present the following definition in order to construct a generalized version of the DA-diffeomorphism using what we call stable DA-plug (or unstable DA-plug).

\begin{definicao}
  Let  $f: \mathcal{U} \longrightarrow \mathbb{R}^2$ a $C^1$-diffeomorphism where 
  $\mathcal{U} := (-\epsilon, \epsilon) \times (-\epsilon, \epsilon)$.
 The pair $(\mathcal{U},f)$ is called a stable $DA$-plug (or unstable DA-plug respectively) if:
    \begin{enumerate}
        \item The decomposition $(\{x\} \times (-\epsilon,\epsilon))$ is $f$-invariant.
        \item The vertical line $\{0\} \times (-\epsilon, \epsilon)$ contains three hyperbolics fixed points:
        $p_0 = (0,0)$, which is a source (sink, respectively), and $p_1$, $p_2$  are saddles, which $p_1 < p_0 < p_2$. These are the only fixed points in $\mathcal{U}$.
        \item $f$ ($f^{-1}$, respectively) is a contraction on vertical lines $\{x\} \times (-\epsilon, \epsilon)$, $x \neq 0$.
    \end{enumerate}
\end{definicao}

The following theorem is a relaxation of the hypotheses in the construction of a DA-Anosov diffeomorphism, the proof essentially follows Robinson's construction [11],

\begin{teorema}\label{teorema2.2}
Let $f : \mathcal{U}=(-\epsilon, \epsilon) \times (-\epsilon, \epsilon)  \to \mathbb{R}^2 $ be a $ C^1 $-diffeomorphism with \( p_0 = (0,0) \) a hyperbolic fixed point of saddle type. Suppose that the decomposition into vertical lines \( \{x\} \times (-\epsilon, \epsilon) \) defines the leaves of the local stable foliation; that is, these vertical lines are invariant under \( f \), and \( f \) acts as a contraction along them. Then there exists a stable DA-plug \( (\mathcal{U}, \hat{f}) \) and there is a small disk \( D \subset \mathcal{U} \), centered at \( p_0 \) (source) and containing the two saddle points of \( \hat{f} \), with the property that $\hat{f}|_{\mathcal{U} \setminus D} = f|_{\mathcal{U} \setminus D}$.
\end{teorema}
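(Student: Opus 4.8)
The plan is to reproduce, inside the abstract chart $\mathcal{U}$ and with all changes supported in a small disk around $p_0$, the derived‑from‑Anosov surgery of Smale and Robinson \cite{robinsonbook}, checking that it uses only the three hypotheses at hand and nothing about a global Anosov map. First I would put $f$ in a convenient local form: since the vertical lines $\{x\}\times(-\epsilon,\epsilon)$ are $f$-invariant, the first coordinate of $f$ cannot depend on $y$, so on a possibly smaller square $f(x,y)=(\varphi(x),f_2(x,y))$ with $\varphi(0)=0$, $|\varphi'(0)|>1$ (the $x$-direction at the saddle is the unstable one), $f_2(0,0)=0$, and $y\mapsto f_2(x,y)$ a contraction for each fixed $x$, so $0<|\partial_y f_2(0,0)|<1$. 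I will assume $\varphi'(0)>1$ and $\mu:=\partial_y f_2(0,0)\in(0,1)$, which holds in all the situations to which the theorem is applied (and in general is arranged by passing to an iterate).

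Next I would perform the surgery. Fix a $C^\infty$ bump $\beta$ equal to $1$ near $p_0$ and supported in a small disk $D\subset\mathcal{U}$, and a $C^\infty$ odd function $\sigma$ supported near $0$, chosen (as is possible) so that $\mu\bigl(1+\sigma'(0)\bigr)>1$ and so that $y\mapsto y+\beta(x,y)\sigma(y)$ is an increasing diffeomorphism of each vertical fibre onto its image; shrink $\operatorname{supp}\beta$ further so that $f(D)\subset\mathcal{U}$. Set
\[
h(x,y)=\bigl(x,\ y+\beta(x,y)\sigma(y)\bigr),\qquad \hat f=f\circ h .
\]
Then $h$, hence $\hat f$, is a $C^1$-diffeomorphism (onto its image) carrying each vertical line into a vertical line, so axiom (1) of the plug holds; and $\hat f=f$ off $\operatorname{supp}\beta\subset D$, which is the disk required by the statement.

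Then I would read off the fixed points. Writing $\hat f(x,y)=(\varphi(x),\widehat f_2(x,y))$, a fixed point needs $\varphi(x)=x$, hence $x=0$ once $\mathcal{U}$ is small enough, and then $g(y):=\widehat f_2(0,y)=f_2\bigl(0,y+\beta(0,y)\sigma(y)\bigr)=y$. Near $0$ one has $g'(0)=\mu(1+\sigma'(0))>1$, while for $|y|$ outside $\operatorname{supp}\sigma$ one has $g(y)=f_2(0,y)$, which is a contraction toward $0$; hence $g(y)-y$ vanishes at $0$ and — choosing $\sigma$ generically, so the only sign changes of $g(y)-y$ are transversal — at exactly two further points $y_1<0<y_2$, which lie in $\operatorname{supp}\beta$ and so in $D$. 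At each fixed point $D\hat f$ is lower triangular with eigenvalues $\varphi'(0)>1$ and $g'(\cdot)$: at $p_0=(0,0)$ the second eigenvalue is $>1$, so $p_0$ is a source; at $p_i=(0,y_i)$ a transversal crossing of $g(y)-y$ together with $g'>0$ (the fibrewise‑diffeomorphism property) forces $g'(y_i)\in(0,1)$, so $p_1,p_2$ are saddles with $p_1<p_0<p_2$ — axiom (2). For axiom (3): off $D$ we have $\widehat f_2(x,\cdot)=f_2(x,\cdot)$, a contraction, and for $x\ne 0$ the forward orbit of a point of $\{x\}\times(-\epsilon,\epsilon)$ has first coordinate $\varphi^n(x)$, which leaves the $x$-extent of $D$ after a bounded number of steps (as $\varphi$ is expanding) and is governed by $f$ afterwards; taking $D$ thin in the $x$-direction relative to the expansion rate of $\varphi$ then yields the contraction along these lines, exactly as in \cite{robinsonbook}. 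The unstable DA-plug follows by the same argument applied to $f^{-1}$.

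The main obstacle is the quantitative coordination in the middle step: the perturbation must be \emph{large} — it has to push $\partial_y f_2(0,0)$ from $\mu<1$ past $1$, so it is not $C^1$-small — yet it must keep $\hat f$ an injective $C^1$-map with non‑vanishing Jacobian, produce \emph{exactly} two new fixed points and no others, make them hyperbolic of saddle type, and not destroy contraction along the vertical lines $\{x\}\times(-\epsilon,\epsilon)$, $x\ne0$. Securing all of these simultaneously is precisely the content of Robinson's explicit construction; the only new point here is the observation that it never uses the ambient torus or the linear Anosov map, only the $f$-invariance of the vertical foliation, the contraction along it, and the saddle at $p_0$, so it transports verbatim to the abstract plug $(\mathcal{U},\hat f)$.
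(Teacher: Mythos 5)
Your proposal is correct and follows essentially the same route as the paper: both localize Robinson's DA surgery by composing $f$ with a fibre-preserving vertical perturbation supported in a small disk around $p_0$ (the paper realizes the perturbation as the time-$\tau$ map of the vertical vector field $V(u_1,u_2)=(0,u_2\,\delta(\sqrt{u_1^2+u_2^2}))$ and sets $\hat f=\varphi^{\tau}\circ f$, while you use an explicit fibre map $h$ and set $\hat f=f\circ h$). Your verification of the exact fixed-point count, the saddle type of $p_1,p_2$, and the behaviour on the vertical lines with $x\neq 0$ is in fact more detailed than the paper's, which only computes the eigenvalue at $p_0$ and asserts the rest.
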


\begin{proof} Let $f$ a $C^1$ diffeomorphism and $p_0 = (0,0)$ be a hyperbolic fixed point of saddle type ($\det (Df(p_0)) < 0$). We may assume that $\det(Df(p)) < 0$ for all $p \in \mathcal{U}$. 

By hypothesis, $\{0\} \times (-\epsilon, \epsilon)$ is the stable space of $p_0$, modulo a change of coordinates preserving the vertical lines we can assume that $ (-\epsilon, \epsilon)\times \{0\}$ is the unstable space of $p_0$. We denote by $v_0^u$ and $v_0^s$ the perpendicular eigenvectors in  $p_0$ while the horizontal segment generated by $v_0^u$ is the unstable manifold of $p_0$. 

Thus, we introduce the coordinate system $u_1 v_0^u + u_2v_0^s$ in $\mathcal{U} = (-\varepsilon, \varepsilon) \times (-\varepsilon, \varepsilon)$. Let $r_0 > 0$ such that $B_{r_0}(p_0) \subset U$ and $\delta : \mathbb{R} \xrightarrow{C^{\infty}} \mathbb{R}$ be a bump function such that $\delta(x) = 0$ for $|x| \geq r_0$, $\delta(x) = 1$ for $|x| \leq \frac{r_0}{2}$, and $\delta'(x) < 0$ for $\frac{r_0}{2} < |x| < y_0$. We define the vector field $V(u_1, u_2) = (0, u_2 \, \delta(\sqrt{u_1^2 + u_2^2}))$. Observe that $V \equiv 0$ outside $B_{r_0}(p_0)$ and along the horizontal line ($u_2 = 0$). Let $\varphi^t$ denote the \emph{complete} flow induced by vector field $V$. Clearly, $\varphi^t \equiv \text{id}$ outside $B_{r_0}(p_0)$ and along $\{u_2 = 0\}$. The diffeomorphism $f_t = \varphi^t \circ f$ preserves the vertical foliation and coincides with $f$ outside $B_{r_0}(p_0)$.

We have
\[
Df_t|_{(p_0)} = D\varphi^t_{(f(p_0))} \cdot Df_{(p_0)} = \begin{pmatrix} \lambda_0^u & 0 \\ 0 & e^t\lambda_0^s \end{pmatrix}
\]
where $0 < \lambda^s_0 < 1 < \lambda^u_0$ are the eigenvalues of $Df(p_0)$. Therefore, there exists $\tau > 0$ sufficiently large such that $e^{\tau} \lambda^s_0 > 1$. We define $\hat{f} = \varphi^{\tau} \circ f$. 

We note that the flow $\varphi^t$ preserves each vertical line, because of the form of the diﬀerential equations. Therefore, $\hat{f}$ preserves the vertical lines. 
Because $\hat{f}(p_0) = p_0$ is a source and outside $B_{r_0}(p_0)$
the slope of the graph of $\hat{f}$ on the vertical lines is negative, then $\hat{f}$ has three fixed points on  $\{0\} \times (-\epsilon, \epsilon)$, $p_0$ and two new fixed points $p_1$ and $p_2$ both of saddle type with $p_1 > p_0 > p_2$. Thus $(\mathcal{U},\hat{f})$ is a stable DA-plug.
\end{proof}

\begin{corolario}\label{corolário2.1}
Let \( f : \mathcal{U}\to \mathbb{R}^2 \) be a \( C^1 \)-diffeomorphism with \( p_0 = (0,0) \) a hyperbolic fixed point of saddle type. Suppose that the decomposition into vertical lines \( \{x\} \times (-\epsilon, \epsilon) \) defines the leaves of the local unstable foliation; that is, these vertical lines are invariant under \( f \), and \( f^{-1} \) acts as a contraction along them. Then there exists a unstable DA-plug \( (\mathcal{U}, \hat{f}) \) and there is a small disk \( D \subset \mathcal{U} \), centered at \( p_0 \) (sink) and containing the two saddle points of \( \hat{f} \), with the property that $\hat{f}|_{\mathcal{U} \setminus D} = f|_{\mathcal{U} \setminus D}$. 
\end{corolario}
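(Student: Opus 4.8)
The plan is to deduce this corollary from Theorem \ref{teorema2.2} by passing to the inverse map. Set $g:=f^{-1}$, which is a $C^1$-diffeomorphism defined on the open neighbourhood $f(\mathcal{U})$ of $p_0$; after possibly shrinking $\epsilon$ we may assume $g$ is defined on a square $(-\epsilon,\epsilon)\times(-\epsilon,\epsilon)$, which is harmless since the whole construction will take place inside a small ball $B_{r_0}(p_0)$. The point $p_0=(0,0)$ is again a hyperbolic fixed point of saddle type for $g$ (inversion does not change the sign of the determinant of the differential, and a saddle remains a saddle). By hypothesis the vertical lines $\{x\}\times(-\epsilon,\epsilon)$ are invariant under $f$, hence under $g$, and $f^{-1}=g$ contracts along them; that is, these vertical lines are the leaves of the local \emph{stable} foliation of $g$ at $p_0$. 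Therefore $g$ satisfies the hypotheses of Theorem \ref{teorema2.2}.

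Applying Theorem \ref{teorema2.2} to $g$ yields a stable DA-plug $(\mathcal{U},\hat{g})$ and a small disk $D\subset\mathcal{U}$ centred at $p_0$ (now a source of $\hat{g}$) containing the two saddle points of $\hat{g}$, with $\hat{g}|_{\mathcal{U}\setminus D}=g|_{\mathcal{U}\setminus D}$; moreover, by the construction in that proof, $\hat{g}=\varphi^{\tau}\circ g$, where $\varphi^{t}$ is the flow of the vertical vector field $V$, supported inside a ball $B_{r_0}(p_0)\subset D$. Now define $\hat{f}:=\hat{g}^{-1}$. Then $\hat{f}$ is a $C^1$-diffeomorphism, the vertical lines are $\hat{f}$-invariant (being $\hat{g}$-invariant), and $\hat{f}^{-1}=\hat{g}$ is a contraction along the vertical lines $\{x\}\times(-\epsilon,\epsilon)$ with $x\neq 0$; also $p_0$ is a sink of $\hat{f}$ (it is a source of $\hat{g}$), the points $p_1,p_2$ are saddles of $\hat{f}$ with the same vertical ordering, and these are the only fixed points in $\mathcal{U}$. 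Hence $(\mathcal{U},\hat{f})$ is an unstable DA-plug, with $D$ centred at the sink $p_0$ and containing its two saddle points.

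It remains to verify that $\hat{f}$ coincides with $f$ on $\mathcal{U}\setminus D$. From $\hat{g}=\varphi^{\tau}\circ g=\varphi^{\tau}\circ f^{-1}$ we obtain $\hat{f}=\hat{g}^{-1}=f\circ\varphi^{-\tau}$. Since $\varphi^{-\tau}=\mathrm{id}$ outside $B_{r_0}(p_0)$, we get $\hat{f}(x)=f(\varphi^{-\tau}(x))=f(x)$ for every $x\in\mathcal{U}\setminus B_{r_0}(p_0)$, and in particular for every $x\in\mathcal{U}\setminus D$. I expect the only genuinely delicate point to be exactly this last step: one must make sure that inverting the \emph{perturbed} map $\hat{g}=\varphi^{\tau}\circ g$ does not spread the modification beyond the support of $\varphi^{t}$, which is resolved precisely by the identity $\hat{f}=f\circ\varphi^{-\tau}$; everything else is the routine translation of the properties of a stable DA-plug into those of an unstable DA-plug under $f\mapsto f^{-1}$.
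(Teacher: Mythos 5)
Your proposal is correct and follows exactly the route the paper takes: the paper's entire proof is the remark that one constructs a stable DA-plug for $f^{-1}$ via Theorem \ref{teorema2.2} and observes that its inverse is an unstable DA-plug for $f$. Your additional verification that $\hat{f}=f\circ\varphi^{-\tau}$ coincides with $f$ outside the support of the perturbation is a correct and worthwhile elaboration of the step the paper leaves implicit.
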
 

The proof simply involves constructing a stable DA-plug for $f^{-1}$, which will be an unstable DA-plug for $f$.

\enlargethispage{1\baselineskip}

\subsection{Gluing DA-plugs}\label{Gluing DA-plugs}

In this subsection we will proceed to glue two DA-plugs, always one stable and one unstable. We follow the construction presented in Artigue-Pacifico-Vieitez \cite{artiguepacificovieitez}. We need to repeat the fundamental steps of this construction because the notation and the way of schematizing will be necessary for future constructions.
 
Let $f_1: \mathcal{U}_1 \longrightarrow \mathbb{R}^2$ be  $C^1$-diffeomorphism  stable  $DA$-plug, and $f_2: \mathcal{U}_2 \longrightarrow \mathbb{R}^2$, a $C^1$-diffeomorphism  unstable  $DA$-plug. Note that $f_2$ is not necessarily conjugated to $f_1^{-1}$, it is only required that $(\mathcal{U}_2,f_2)$  be an unstable $DA$-plug. Let $p_0$ be the source of $f_1$ and $q_0$ be the sink of $f_2$. In both cases, the vertical lines are invariant under $f_1$ (resp. $f_2$), although $f_1$ does not contract these in the neighborhood of $p_1$, for future reasons, we will call this stable foliation for $f_1$ (resp. unstable foliation for $f_2$).
 
We will assume that there exist local charts $\varphi_i: D \longrightarrow \mathcal{U}_i$, $i=1,2$, with $D =\{x \in \mathbb{R}^2: ||x|| \le 2\} $, such that:

\begin{enumerate}
    \item $\varphi_1(0) = p_0$ and $\varphi_2(0) = q_0$,
    \item $ \varphi_1(D) \subset W^u(p_0)$ and $\varphi_2(D)  \subset W^s(q_0)$,
    \item The pull-back of the stable (res. unstable) foliation by $\varphi_1$ (resp. $\varphi_2$) is the vertical (resp. horizontal) foliation on $D$, and 
\item  $\varphi_1^{-1} \circ f_1^{-1} \circ \varphi_1(x) = \varphi_2^{-1} \circ f_2 \circ \varphi_2(x) = x/4$ for all $x \in D$.
\end{enumerate}

Let $A$ be the annulus $\{x \in \mathbb{R}^2: 1/2 \le ||x|| \le 2 \}$ and $\psi: \mathbb{R}^2 \longrightarrow \mathbb{R}^2$ the inversion $\psi(x) = x/||x||^2$. Consider the open disk $\hat{D}=\{x \in \mathbb{R}^2 \; : \; ||x|| < \frac{1}{2}\} $ (see Figure \ref{plugs}). On $[\mathcal{U}_1 \setminus \varphi_1(\hat{D})] \cup [\mathcal{U}_2 \setminus \varphi_2(\hat{D})]$ consider the equivalence relation generated by $\varphi_1(x) \sim \varphi_2 \circ \psi(x)$ for all $x \in A$. Denote by $\bar{x}$ the equivalence class of $x$. The set $ \mathcal{U}_1 \# \mathcal{U}_2 =[\mathcal{U}_1 \setminus \varphi_1(\hat{D})] \cup [\mathcal{U}_2 \setminus \varphi_2(\hat{D})] \; / \sim $ with the quotient topology is a surface, which we call connected sum. 
 
 Then we define homeomorphism  $\tilde{f}: \mathcal{U}_1 \# \mathcal{U}_2 \longrightarrow \mathcal{U}_1 \# \mathcal{U}_2$ by 

$$
\tilde{f}(x) = 
  \begin{cases}
      \overline{f_1(x)}, & x \in \mathcal{U}_1 \setminus \varphi_1(\hat{D}), \\
     \overline{f_2(x)} , & x \in \mathcal{U}_2 \setminus \varphi_2(\hat{D}). 
  \end{cases}
$$

\begin{figure}[ht]
    \centering
    \includegraphics[width=0.75\linewidth]{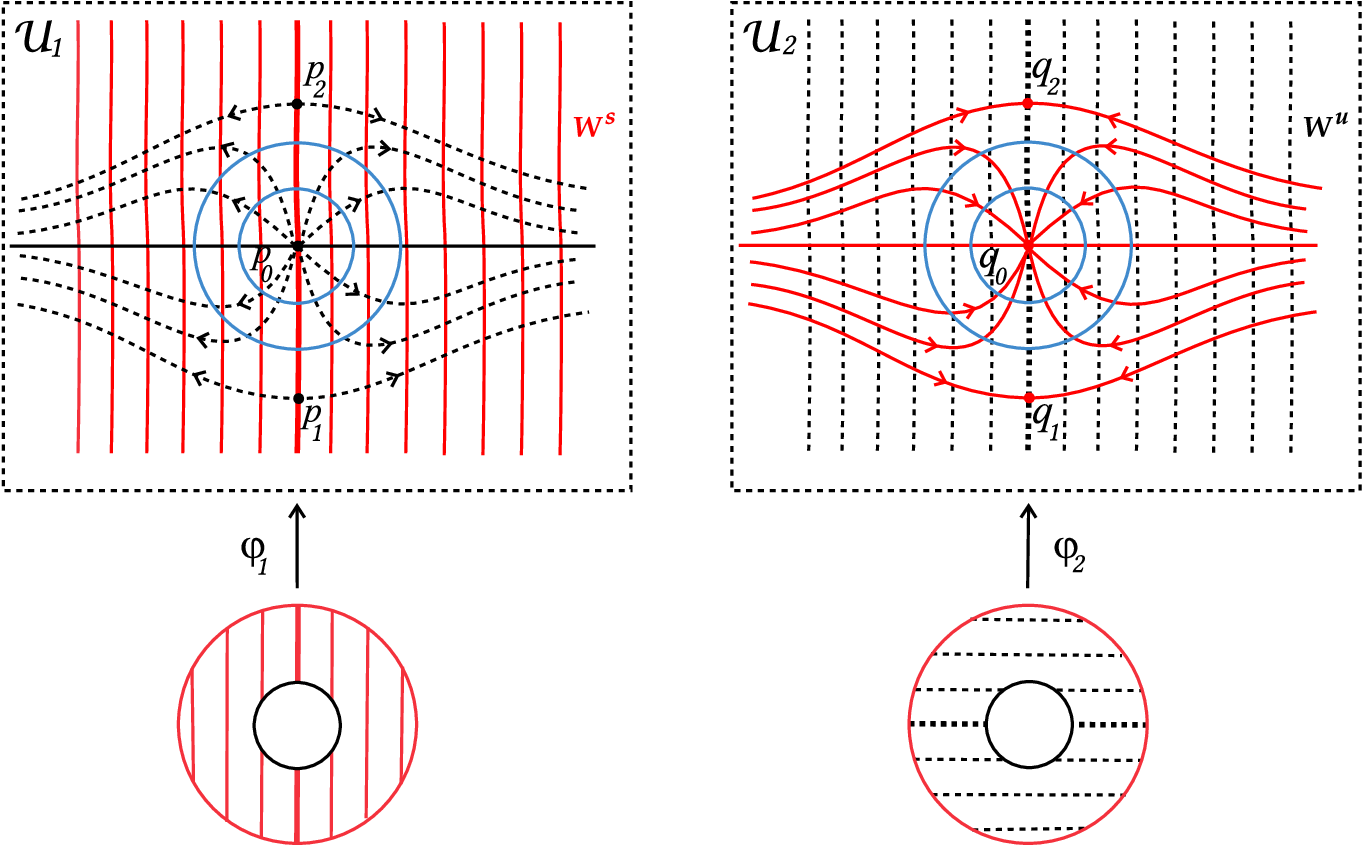}
    \caption{Stable and unstable DA-plugs.}
    \label{plugs}
\end{figure}

We can visualize, in  a Figure (\ref{anel}.a), the two foliations in the annulus A. The vertical lines (red) represent the stable foliation and the  dotted lines (black) represent the unstable foliation (after the inversion). In the Figure (\ref{anel}.b), we can see the surface $ \mathcal{U}_1 \# \mathcal{U}_2$. 

\begin{figure}[ht]
    \centering
    \includegraphics[width=0.73\linewidth]{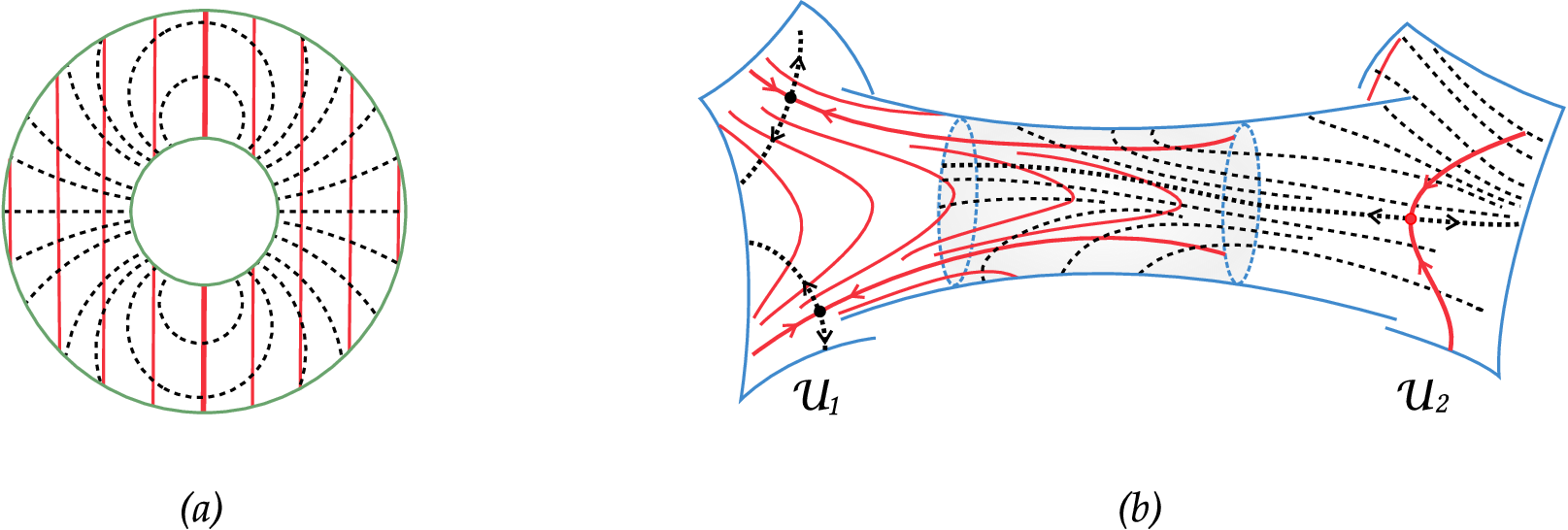}
    \caption{Annulus A and surface $ \mathcal{U}_1 \# \mathcal{U}_2$.}
    \label{anel}
\end{figure}

\subsection{2-expansive on bitorus}\label{2-expansive}
We need to detail the construction of the Artigue-Pacifico-Vieitez example \cite{artiguepacificovieitez} and recall some properties about its invariant sets. Let $S_i$ ($i = 1, 2)$ be disjoint copies of $\mathbb{T}^2$, and let $f_i: S_i \longrightarrow S_i$ be DA-Anosov diffeomorphisms with fixed points $f_1(p_0)=p_0$ and $f_2(q_0)=q_0$ (a source and a sink, respectively). Where, $f_1$ has a stable DA-Plug and $f_2$ has an unstable DA-Plug.

Let $V_1\subset W^u(p_0,f_1)$ (resp. $V_2\subset W^s(q_0,f_2)$) open sets, then we have 
$$ W^u(p_0,f_1)= \bigcup_{k=0}^{\infty} f_1^k(V_1)\;\;\;\; \mbox{and} \;\;\;\;W^s(q_0,f_2)= \bigcup_{k=0}^{\infty} f_2^{-k}(V_2) .$$  

Both are open and dense in $S_1$ and $S_2$, respectively, which we call \textit{gaps}. Then 

 $$\Lambda_1 = \bigcap_{k \geq 0} f_1^{k}\left(N_1  \right) \;\;\;\; \mbox{and} \;\;\;\; \Lambda_2 = \bigcap_{k \geq 0} f_2^{-k}\left(N_2 \right),$$ 

 where $N_i=S_i\setminus V_i$ ($i=1,2$) are trapping regions.

\newpage

The maps $f_i$ have a hyperbolic structure on $\Lambda_i$ respectively, then the set $\Lambda_1$ (resp. $\Lambda_2$ ) is a hyperbolic expanding attractor of topological dimension one to $f_1$ (resp. $f_2^{-1}$). The sets $\Lambda_i$ are transitive. The periodic points of $f_i$ are dense in $\Lambda_i$. 

The stable DA-plug of $f_1$ has two fixed points  $p_1$ and $p_2$ both of saddle type with $p_1>p_0>p_2$.  The unstable DA-plug of $f_2$ has two fixed points  $q_1$ and $q_2$ both of saddle type with $q_1>q_0>q_2$. We have for $j=1,2$, $W^u(p_j,f_1)$ is dense in $\Lambda_1$  and $W^s(q_j,f_2)$ is dense in $\Lambda_2$. For all $x\in \Lambda_1$, $W^s(x,f_1)$ is dense in $S_1$  and for all $x\in \Lambda_2$, $W^u(x,f_2)$ is dense in $S_2$. 

We now perform the connected sum of the stable and unstable plugs of $(S_1,f_1)$ and $(S_2,f_2)$, obtaining the bitorus surface $\Sigma_2=S_1 \# S_2$ and the homeomorphism $\displaystyle f:\Sigma_2 \longrightarrow \Sigma_2$. The homeomophism $f$ is $2$-expansive, but not expansive (see Artigue-Pacifico-Vietz \cite{artiguepacificovieitez}, Proposition 6.1). To facilitate development, we will henceforth use the position of the component of $S_1$ in $\Sigma_2$ with its attractor $\Lambda_1$ on the left, and consequently the repulsor $\Lambda_2$ on the right (see Figure \ref{colagemgenero3}).

\begin{figure}[ht]
    \centering
    \includegraphics[width=0.65\linewidth]{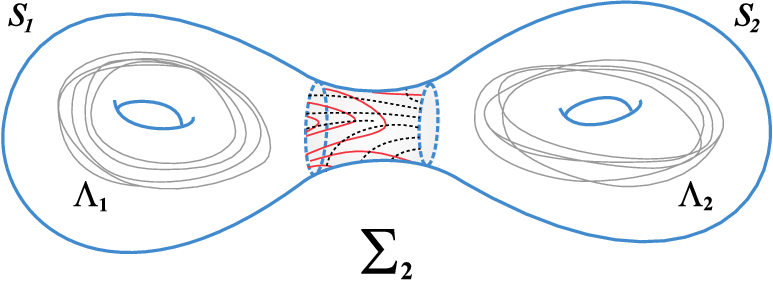}
    \caption{$\Sigma_2$, $\Lambda_1$ and $\Lambda_2$.}
    \label{colagemgenero3}
\end{figure}

\section{Proof of the theorems}\label{secaomainresults}

In this section, we present proofs of the Theorems (\ref{2expansivos}) and (\ref{thprincipal}) stated in the introduction. In fact, we construct families of new examples on compact surfaces. 

\begin{teorema}\label{2expansivos}
There exist 2-expansive homeomorphisms on compact surfaces of genus greater than or equal to 2 which are not expansive.
\end{teorema}

\begin{proof} Let $f:\Sigma_2 \longrightarrow \Sigma_2$ be a 2-expansive homeomorphism as constructed in Subsection (\ref{2-expansive}). That is, $\Sigma_2=S_1 \# S_2$ and $f \coloneqq f_1 \# f_2$, where $\Lambda_1 \subset S_1$ is a hyperbolic attractor whose periodic points are dense. We identify $S_1$ with the torus portrayed on the left of Figure (\ref{colagemgenero3}). Moreover, the stable set $W^s(\Lambda_1)$ is dense in $S_1$.

Thus, there exists $x_0 \in \Lambda_1$ such that $x_0$ is a periodic point of saddle type with period $k_0 \in \mathbb{Z}$ of $f_1$ and $W^u(x_0,f_1)$ is dense on $\Lambda_1$. Then, from the Lemma (\ref{2expansivos}) and observation, $f_1^{k_0}$ is a 2-expansive with $x_0$ fixed point of saddle type. For simplicity of notation, let's denote it by $f_1=f_1^{k_0}$, thus $x_0$ is a fixed point of saddle type for $f_1$ and modulo the change of coordinates, satisfying the hypotheses of the Theorem (\ref{teorema2.2}), therefore $f_1$ on $S_1$ has two stable DA-plugs, one centered on $p_0$ and the other on $x_0$.

Let $U_1\subset W^u(p_0,f_1)$ and $U_2\subset W^u(x_0,f_1)$, then $\bigcup_{i\geq 0}f^i(U_1 \cup U_2)$ give rise to two disjoint gaps in $S_1$. Denoting by $N=S_1\setminus (U_1 \cup U_2)$ is a trapping region and
$$\bigcap_{i\geq 0} f^i(N)=\widehat{\Lambda}_1$$

The set $\widehat{\Lambda}_1$ is hyperbolic expanding attractor of topological dimension one to $f_1$, and periodic points of $f_1$ are dense in $\widehat{\Lambda}_1$ and $W^s(\widehat{\Lambda}_1)$ é dense  in $S_1$.

Let $f_i:S_i \longrightarrow S_i$, $i=2,3$, be two DA-anosov diffeomorphisms with unstable DA-plugs, from the Subsection (\ref{Gluing DA-plugs}). We proceed to glue one of these unstable DA-plugs onto each stable DA-plug of $S_1$. Thus we have the homeomorphism $g:\Sigma_3 \longleftrightarrow \Sigma_3$ where $\Sigma_3$ is a surface compact of genus 3 and $g$ is a 2-expansive homeomorphism. Note that the decomposition of the stable and unstable sets into $\Sigma_3$ for $g$ is generated by iterations of the decomposition in the gluing annulus.

\begin{figure}[ht]
    \centering
    \includegraphics[width=0.75\linewidth]{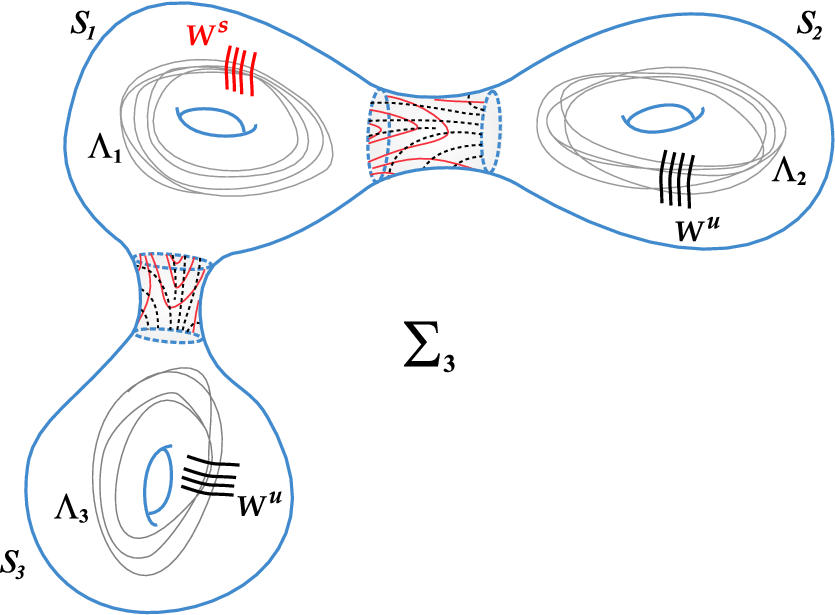}
    \caption{Genus 3 surface.}
    \label{fig:placeholder}
\end{figure}

Inductively, we can take a copy of $S_i$, $i=1,2,3$ in the construction above, search for a periodic point on the hyperbolic set $\Lambda_i$, if such a periodic point is in $\widehat{\Lambda}_1\subset S_1$, we proceed as above to obtain a third stable DA-plug, thus constructing an example in $\Sigma_4$, a surface compact of genus 4. If such a periodic point is chosen in $S_2$ or $S_3$, we proceed as above, but this time we obtain an unstable DA-plug; for this we must take a stable DA-plug to paste, thus obtaining an example in $\Sigma_4$.
\end{proof}

For the next proposition, we will use a pseudo-Anosov diffeomorphism with 1-prongs on the sphere. For this purpose, we need to construct it and discuss some of its properties. This example appears in P. Walters \cite{walters} (Example $1$, p. $140$). In \cite{dentritations} (Example 2.2.1), Artigue shows that it is $cw2$-expansive and that, for all $N\in \mathbb N$, it is not $N$-expansive.

Consider the 2-torus $\mathbb{T}^2=\mathbb{R}^2/\mathbb{Z}^2$, and fix the fundamental domain of $\mathbb{T}^2$ to be the square $[-1/2,1/2]\times [-1/2,1/2]$. Consider the equivalence relation $X \sim -X$ for $X\in \mathbb{T}^2$. The quocient space is a two-dimensional sphere $\mathbb{S}^2=\mathbb{T}^2/\sim$. Denote by $\Pi: \mathbb{T}^2 \longrightarrow \mathbb{S}^2$ the canonical projection. Let $f_A:\mathbb{T}^2 \longrightarrow \mathbb{T}^2 $ be a Linear Anosov diffeomorphism induced by $A\in SL(2,\mathbb{Z})$. Moreover the relation $X \sim -X$ is compatible with the Anosov map $f_A$, i. e.: $f_A(X)\sim -f_A(X)= f_A(-X)$ by linearity, and therefore projects to $\mathbb{S}^2$ called \textit{generalized pseudo-Anosov}  map $f:\mathbb{S}^2 \longrightarrow \mathbb{S}^2 $, where $f(x)=\Pi(f_A(\Pi^{-1}(x)))$. Observe that the projection $\Pi: \mathbb{T}^2 \longrightarrow \mathbb{S}^2$  is a branched covering. 

\newpage

Denote by $W^s$ and $W^u$ the stable and unstable singular foliations of $f$ respectively. These are transverse foliations except at the singularities. Singularities points are 1-prongs (that is, point $p\in \mathbb{S}^2$ in which the local stable and unstable are arcs finishing at $p$), for example the point $\Pi(0)$ is a point with a 1 prong. The foliations $W^s$ and $W^u$ in a neighborhood of the 1 prong $\Pi(0) \in \mathbb{S}^2 $ determine a regular bi-asymtotic sector (see Arruda-Carvalho-Sarmiento \cite{arrudacarvalhosarmiento}, definition 3.1), it is the foliations look as in Figure (\ref{bi-asymptotic}). Periodic points of $f$ are dense in $\mathbb{S}^2 $. If $p\in \mathbb{S}^2 $ is a periodic point, the $W^s(p,f)$ and $W^u(p,f)$ are dense in $\mathbb{S}^2 $.

\begin{figure}[ht]
    \centering
    \includegraphics[width=0.5\linewidth]{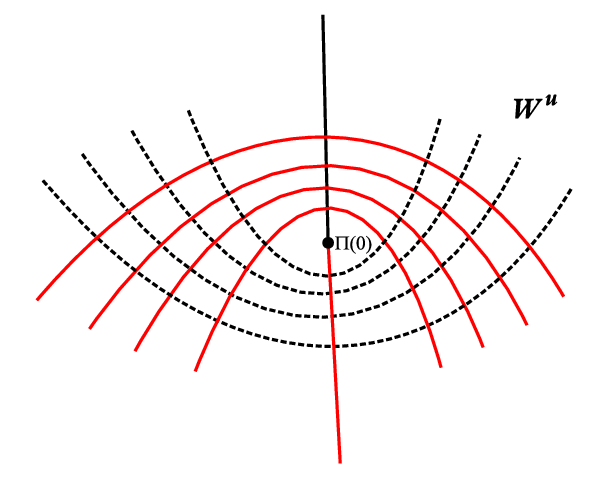}
    \caption{Regular bi-asyntotic sector.}
    \label{bi-asymptotic}
\end{figure}

Note that, as mentioned by Artigue in \cite{artiguerobustly} in the proof of Theorem 4.1, all homeomorphisms $f:\Sigma_g \longrightarrow \Sigma_g$ constructed in the previous  Theorem (\ref{2expansivos}) are $C^{\infty}$-diffeomorphisms. Therefore, any of these $C^{\infty}$-diffeomorphisms on a compact surface of genus $g\geq 2$ are Axiom $A$, because the non-wandering set consists of a hyperbolic attractor and a hyperbolic repeller, and has no cycles. Fixed in $\Sigma_g$ a gluing annulus $\overline{A}$. The stable and unstable foliations of $f$ in the annulus look like those shown in Figure 1 of Artigue  \cite{artiguerobustly}.

Consequently, by applying the same arguments used in the proof of Theorem 4.1 in  Artigue \cite{artiguerobustly}, we obtain the following theorem.

\begin{teorema}\label{robustez}
For each $r \geq 2$ there is a $C^r$-robustly $r$-expansive diffeomorphism that is not $(r-1)$-expansive on surface compact of genus $g \ge 2$ .
\end{teorema}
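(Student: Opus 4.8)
The plan is to follow Artigue's argument from the proof of Theorem 4.1 in \cite{artiguerobustly}, adapting it to the homeomorphisms $f:\Sigma_g\to\Sigma_g$ constructed in Theorem \ref{2expansivos}. First I would observe, as already noted in the discussion preceding the statement, that each such $f$ is in fact a $C^\infty$-diffeomorphism, and that its non-wandering set $\Omega(f)$ is the disjoint union of a one-dimensional hyperbolic expanding attractor $\widehat{\Lambda}_1$ (coming from the iterated DA-construction on $S_1$) and finitely many one-dimensional hyperbolic repellers (the $\Lambda_i$, $i\ge 2$, from the glued unstable plugs), with no cycles between them. Hence $f$ satisfies Axiom A and the no-cycle condition, so by the structural stability theorem $f$ is $C^r$-structurally stable for every $r\ge 1$: every $g$ that is $C^1$-close to $f$ is topologically conjugate to $f$.

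Next I would reduce $r$-expansiveness to a local question near the finitely many gluing annuli. Away from a neighborhood of the gluing annuli, $f$ coincides with the original DA-Anosov (or pseudo-Anosov) pieces, where hyperbolicity gives genuine expansiveness; the only place where dynamic balls can contain more than one point is where a stable plug has been glued to an unstable plug, i.e.\ in the bi-asymptotic sector visible in the annulus $\overline A$. Fixing such an annulus and the picture of the stable and unstable foliations in it (Figure 1 of \cite{artiguerobustly}), I would show that for a suitable expansiveness constant $\alpha$ the dynamic ball $\Gamma_\alpha(x)$ of any point is contained in the union of a piece of a local stable leaf and a piece of a local unstable leaf through $x$, and that the combinatorics of how many such leaves can remain $\alpha$-close under all iterates is governed by the number of ``branches'' of the plugs being glued — giving exactly $r$ points when $r$ plugs come together, and never fewer than $r$ at the relevant periodic saddle, so $f$ is $r$-expansive but not $(r-1)$-expansive. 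Since $r$-expansiveness of a homeomorphism depends only on its topological conjugacy class (dynamic balls are preserved by conjugacy), structural stability then upgrades this to $C^r$-\emph{robust} $r$-expansiveness: every diffeomorphism in a $C^1$-neighborhood of $f$ is conjugate to $f$ and hence is $r$-expansive and not $(r-1)$-expansive.

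Finally, I would address the genus count. Theorem \ref{2expansivos} produces, for each $n\ge 2$, a $2$-expansive example on $\Sigma_n$; to get an \emph{$r$}-expansive example I would instead start from $S_1$ carrying an iterated DA-diffeomorphism with $r$ stable DA-plugs (obtained by choosing $r-1$ periodic saddles in the attractor with dense unstable manifolds and applying Theorem \ref{teorema2.2} at each, as in the inductive step of the proof of Theorem \ref{2expansivos}), and glue an unstable DA-plug from a fresh torus onto each; this raises the genus enough that the construction lives on some $\Sigma_g$ with $g\ge 2$, and by further free gluing of expansive Anosov tori (which does not change the dynamic-ball count) one can realize every genus $g\ge 2$. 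The bookkeeping showing the resulting surface has the claimed genus and that the no-cycle condition survives the extra gluings is routine.

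I expect the main obstacle to be the local analysis in the gluing annulus: precisely controlling the dynamic ball $\Gamma_\alpha(x)$ near the bi-asymptotic sector where $r$ plugs meet, and proving the sharp count — that it has exactly $r$ elements at the critical periodic point and at most $r$ elsewhere for a uniform $\alpha$. This is where the foliation picture of \cite{artiguerobustly} must be invoked carefully; once that estimate is in hand, the passage to $C^r$-robustness via Axiom A structural stability is essentially formal.
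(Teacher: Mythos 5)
There are two genuine gaps. First, your appeal to structural stability is invalid. Axiom A together with the no-cycle condition gives only $\Omega$-stability; $C^1$-structural stability additionally requires strong transversality, and that is precisely what fails here: in each gluing annulus the stable leaves of the attractor (vertical arcs) are \emph{tangent} to the unstable leaves of the repeller (circle arcs), and these tangencies in the wandering set are the very source of the non-trivial dynamic balls. So a $C^1$-small perturbation of $f$ need not be conjugate to $f$, and your ``formal'' upgrade to robustness collapses. (A sanity check: if your argument worked it would yield $C^1$-robust $r$-expansiveness, whereas the statement — and Artigue's Theorem 4.1, which the paper's proof defers to — is only about the $C^r$ topology. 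The actual mechanism for robustness is that the invariant foliations of the perturbed Axiom A diffeomorphism vary continuously in the $C^r$ sense, and a pair of arcs with contact of order $r$ cannot acquire more than $r$ intersection points under $C^r$-small perturbation, by a Rolle-type argument; a $C^1$ perturbation could create arbitrarily many intersections.)

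Second, your mechanism for producing $r$-point dynamic balls — gluing $r$ plugs together — does not work. Dynamic balls are local: a point sees at most one gluing annulus, and in each annulus the quadratic tangencies between vertical stable arcs and unstable circle arcs give at most $2$ intersection points, hence at most $2$-point dynamic balls, no matter how many plugs are glued elsewhere on the surface (this only raises the genus). To get exactly $r$ points one must modify the foliations \emph{inside a single gluing annulus} so that a stable leaf meets an unstable leaf with contact of order $r$ (equivalently, in $r$ nearby points at all scales); this is the modification of the gluing map that Artigue performs in the proof of his Theorem 4.1 and that the paper invokes. The paper's own proof is essentially a reduction to that result: it checks that the examples of Theorem \ref{2expansivos} are $C^\infty$ Axiom A diffeomorphisms without cycles whose foliations in the gluing annulus match Artigue's Figure 1, and then cites his argument verbatim; your proposal diverges from it exactly at the two points above.
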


\begin{proposicao}\label{cw2expansivos-0}
There exist $cw2$-expansive homeomorphisms which are not transitive on the sphere $\mathbb{S}^2$ which are not $N$-expansive for all $N\geq 1$.
\end{proposicao}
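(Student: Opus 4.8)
The plan is to modify the generalized pseudo-Anosov map $f:\mathbb{S}^2\longrightarrow\mathbb{S}^2$ recalled above --- which by \cite{walters} and \cite{dentritations} is $cw2$-expansive, is not $N$-expansive for any $N\ge 1$, has dense periodic points, and satisfies that $W^s(p,f)$ and $W^u(p,f)$ are dense for every periodic point $p$ --- by inserting a single stable DA-plug at a periodic saddle, exactly as is done inside a torus component in the proof of Theorem \ref{2expansivos}. Since this surgery is purely local (it replaces $f$ on a small disk by the map $\hat f$ provided by Theorem \ref{teorema2.2}, which agrees with $f$ near the boundary of that disk), it does not change the underlying surface, so the result is again a homeomorphism of $\mathbb{S}^2$.

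Concretely, I would first pick a periodic point $x_0$ of $f$ of saddle type which is a regular point of the foliations, i.e. not one of the finitely many $1$-prongs; such points exist since periodic points are dense, and then $W^u(x_0,f)$ is automatically dense. Let $k_0$ be its period. By Lemma \ref{iteradacw2expansivos} and the accompanying remark for $N$-expansiveness, $f^{k_0}$ is again $cw2$-expansive and not $N$-expansive for any $N\ge 1$; replacing $f$ by $f^{k_0}$ and relabeling, $x_0$ becomes a fixed saddle which, after a coordinate change preserving the vertical (local stable) leaves, satisfies the hypotheses of Theorem \ref{teorema2.2}. Applying that theorem at $x_0$ yields a stable DA-plug $(\mathcal U,\hat f)$ together with a small disk $D\subset\mathcal U$ containing $x_0$ and the two new saddles, with $\hat f|_{\mathcal U\setminus D}=f|_{\mathcal U\setminus D}$; I would take $D$ so small that it is disjoint from a fixed neighborhood of the (finitely many) $1$-prong periodic orbits. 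Gluing, define $g:\mathbb{S}^2\longrightarrow\mathbb{S}^2$ by $g=\hat f$ on $\mathcal U$ and $g=f$ on $\mathbb{S}^2\setminus D$; the two definitions agree on the overlap, so $g$ is a homeomorphism of $\mathbb{S}^2$.

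Next I would extract the (familiar DA) dynamical picture. The point $x_0$ is now a source of $g$, so $W^u(x_0,g)$ is open, and it is dense because $W^u(x_0,f)$ was; writing $W^u(x_0,g)=\bigcup_{i\ge0}g^i(V_0)$ for a small open $V_0\ni x_0$, the set $N_0:=\mathbb{S}^2\setminus V_0$ is a trapping region and $\widehat\Lambda:=\bigcap_{i\ge0}g^i(N_0)$ is an attracting set for $g$, inheriting from $f$ outside the $1$-prongs a hyperbolic structure, with dense periodic orbits and dense stable set, just as the attractor of a DA-diffeomorphism. Every point of $W^u(x_0,g)\setminus\{x_0\}$ is wandering, hence $\Omega(g)\subseteq\{x_0\}\cup\widehat\Lambda$ is a proper subset of $\mathbb{S}^2$; therefore $g$ is not transitive, with attracting set $\widehat\Lambda$. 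This is the sphere analogue of the non-transitivity of a DA-diffeomorphism of $\mathbb{T}^2$ (Theorem \ref{TeoremaRobinson}).

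Finally, I would check the two expansiveness properties. That $g$ is not $N$-expansive for any $N\ge 1$ should follow because the obstruction to $N$-expansiveness of $f$ in \cite{dentritations} is produced entirely by the dynamics near the $1$-prongs, where $g$ coincides with $f$ on a neighborhood of the relevant periodic orbits; Artigue's argument then applies to $g$, producing for each $N$ and each $\alpha>0$ a point whose $\alpha$-dynamic ball has more than $N$ points. For $cw2$-expansiveness I would take the expansivity constant small enough that any $\alpha$-small continuum meeting $D$ is controlled by the local structure of the stable DA-plug --- invariance of the vertical lines and contraction of all of them except the single leaf $\{0\}\times(-\epsilon,\epsilon)$ through the saddles --- while continua disjoint from $D$ are governed by the $cw2$-expansiveness of $f$ itself; combining the cases bounds $\#(A\cap B)$ by $2$ for a forward-$\alpha$-small continuum $A$ and a backward-$\alpha$-small continuum $B$, exactly as in the plug analysis of Artigue-Pacifico-Vieitez \cite{artiguepacificovieitez}. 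The main obstacle is precisely this last point: one must verify that inserting the stable DA-plug does not raise the continuum-wise expansiveness level above $2$ --- that the new source, its gap, and the two new saddles create no third independent shadowing branch --- and, in the same vein, that $\widehat\Lambda$ retains enough hyperbolicity off the $1$-prongs for the standard plug estimates to go through.
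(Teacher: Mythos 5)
There is a genuine gap, and it is located exactly where you flagged ``the main obstacle'': your construction stops after inserting a single stable DA-plug into the pseudo-Anosov map of $\mathbb{S}^2$, whereas the paper's proof performs a connected sum $\mathbb{S}^2\#\mathbb{S}^2=\mathbb{S}^2$ of \emph{two} modified spheres, one carrying a stable DA-plug (source $p_0$) and one carrying an unstable DA-plug (sink $q_0$), glued as in Subsection \ref{Gluing DA-plugs}. This second sphere is not an optional refinement: without it your map $g$ is not even $cw$-expansive. Indeed, after the surgery of Theorem \ref{teorema2.2} the vertical leaf through $x_0$ contains the invariant segment $I=[x_0,p_1]$ joining the new source to one of the new saddles, and $g|_I$ is a monotone homeomorphism of a compact arc whose only fixed points are the endpoints ($x_0$ repelling, $p_1$ attracting along the leaf). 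Conjugating $g|_{\mathrm{int}(I)}$ to the translation $t\mapsto t+1$ by an increasing homeomorphism $h:\mathrm{int}(I)\to\mathbb{R}$ and using that $h^{-1}$ is monotone and bounded, hence uniformly continuous, one finds for every $\alpha>0$ a nondegenerate subarc $C\subset \mathrm{int}(I)$ with $\mathrm{diam}(g^n(C))<\alpha$ for \emph{all} $n\in\mathbb{Z}$. Taking $A=B=C$ in the definition of $cwN$-expansiveness gives $\sharp(A\cap B)=\infty$, so no expansivity constant works. (This is the same reason the classical DA-diffeomorphism itself is not $cw$-expansive, and it is precisely why Artigue--Pacifico--Vieitez glue plugs in pairs.)

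The role of the gluing is to excise a disk around the source $p_0$ (and around the sink $q_0$ of the second copy), so that the problematic segment no longer closes up inside a single invariant leaf: under backward iteration, arcs near the former segment are pushed through the gluing annulus into the second sphere, where they are stretched along the unstable foliation of its repeller, and their diameters grow. Your analysis of non-transitivity (the source creates wandering points) and of non-$N$-expansiveness (localized at the $1$-prongs, where the map is unchanged) carries over to the correct construction, but the $cw2$-expansiveness can only be argued for the glued map $f_1\# f_2$, by the annulus analysis of \cite{artiguepacificovieitez}, not for the single-plug map. To repair the proof, insert the stable DA-plug at a periodic point as you did, take a second copy of the modified pseudo-Anosov sphere map with an \emph{unstable} DA-plug (Corollary \ref{corolário2.1}), and perform the connected sum of the two plugs; non-transitivity then follows because the non-wandering set of $f_1\# f_2$ is contained in the union of the attractor of the first copy and the repeller of the second.
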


\begin{proof}  Let $f$ a pseudo-Anosov map on $\mathbb{S}^2$, it is a $cw2$-expansive. Let $p_0\in \mathbb{S}^2$ periodic point of, say, period $k$, then of Lemma (\ref{iteradacw2expansivos}), $f_1:=f^k$ is a $cw2$-expansive and $f_1(p_0)=p_0$ and $W^s(p_0,f_1)$ and $W^u(p_0,f_1)$ are dense. The foliations $W^s(f_1)$ and $W^u(f_1)$ are transversal in a neighborhood of $p_0$. From Theorem (\ref{teorema2.2}), we can insert a stable DA-Plug centered on $p_0$ (obviously, we can also insert an unstable DA-Plug centered on $p_0$). We can also denote by $f_1$ the modified pseudo-Anosov set with a stable DA-plug; in this situation, $p_0$ is a source.We denote by $f_2$ another copy of the modified pseudo-Anosov map on $\mathbb{S}^2$ with unstable DA-plug centered on $q_0$; it is a sink. We proceed to glue $f_1$ and $f_2$ as in the Subsection (\ref{Gluing DA-plugs}), note that $\mathbb{S}^2\# \mathbb{S}^2=\mathbb{S}^2$, thus we obtain $f=f_1\# f_2: \mathbb{S}^2 \longrightarrow \mathbb{S}^2$ with the desired properties.
\end{proof}

\newpage

\begin{proposicao}\label{cw2expansivos-1}
There exist cw2-expansive homeomorphisms with a 1-prong on the torus $\mathbb{T}^2$ which are not $N$-expansive  for all $N\geq 1$.
\end{proposicao}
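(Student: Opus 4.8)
The plan is to mimic the strategy of Proposition \ref{cw2expansivos-0}, but instead of gluing together two copies of the sphere (which yields a sphere again since $\mathbb{S}^2 \# \mathbb{S}^2 = \mathbb{S}^2$), I would glue a stable plug on a copy of $\mathbb{S}^2$ with an unstable plug on a copy of $\mathbb{T}^2$, so that the connected sum $\mathbb{S}^2 \# \mathbb{T}^2 = \mathbb{T}^2$ is the torus. First I would take the generalized pseudo-Anosov $f$ on $\mathbb{S}^2$ and, using density of periodic points, choose a periodic point $p_0$ whose stable and unstable leaves are dense; passing to an iterate $f_1 := f^k$ (which remains $cw2$-expansive by Lemma \ref{iteradacw2expansivos}) I may assume $p_0$ is fixed. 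Since the stable and unstable foliations of $f_1$ are transverse at $p_0$ (it is a regular point, not one of the $1$-prong singularities), Theorem \ref{teorema2.2} applies and I can insert a \emph{stable} DA-plug centered at $p_0$, turning $p_0$ into a source while leaving $f_1$ unchanged outside a small disk; the surviving $1$-prong singularities of the pseudo-Anosov are untouched.

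Next I would take a linear Anosov diffeomorphism $f_A$ on $\mathbb{T}^2$, pick a fixed (or periodic, then iterate) hyperbolic point $q_0$, and apply Corollary \ref{corolário2.1} to insert an \emph{unstable} DA-plug centered at $q_0$, making $q_0$ a sink. Call the resulting map $f_2 : \mathbb{T}^2 \to \mathbb{T}^2$. I would then perform the gluing construction of Subsection \ref{Gluing DA-plugs}, pairing the stable plug of $f_1$ (at the source $p_0$) with the unstable plug of $f_2$ (at the sink $q_0$), obtaining a homeomorphism $f = f_1 \# f_2$ on $\mathbb{S}^2 \# \mathbb{T}^2 = \mathbb{T}^2$. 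By construction this $f$ carries a $1$-prong (inherited from the pseudo-Anosov component), is non-transitive (its non-wandering set splits into a hyperbolic attractor coming from the $\mathbb{S}^2$-side and a hyperbolic repeller coming from the $\mathbb{T}^2$-side, a proper subset of $\mathbb{T}^2$), and inherits the failure of $N$-expansiveness for all $N$ from the pseudo-Anosov piece: near the $1$-prong the bi-asymptotic sector produces, for each $N$, a dynamic ball containing more than $N$ points, exactly as in Artigue's Example 2.2.1.

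The remaining point is to verify that the glued map is $cw2$-expansive. Here I would argue as in \cite{artiguepacificovieitez} and \cite{artigueanomalous}: away from the gluing annulus the dynamics is that of the two $cw2$-expansive (indeed, the pseudo-Anosov is $cw2$-expansive and the DA-modified Anosov on $\mathbb{T}^2$ is $cw$-expansive by Kato's results), and the decomposition of stable/unstable continua on the glued surface is generated by iterating the two transverse foliations visible in the gluing annulus $A$ (Figure \ref{anel}). A continuum whose forward orbit stays small must be contained, after finitely many iterates, in a single leaf of the unstable-type foliation, and likewise for the stable side under backward iteration; the intersection of a stable-type continuum with an unstable-type continuum is controlled by the transversality in $A$ together with the local product structure near $p_0$ and $q_0$, which forces at most two intersection points — the "$2$'' arising precisely from the two saddle fixed points introduced by each plug.

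The main obstacle I expect is this last verification of the $cw2$-expansivity constant across the gluing region: one must check carefully that no continuum can "hide'' small in a neighborhood of the $1$-prong or of the newly created saddles $p_1, p_2, q_1, q_2$ in a way that inflates the intersection count beyond $2$. This is where the precise geometry of the bi-asymptotic sector and the product structure of the DA-plug must be combined, and it is essentially the same estimate carried out in Artigue's work that I would invoke and adapt rather than redo from scratch.
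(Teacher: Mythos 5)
Your proposal follows essentially the same route as the paper: take the modified pseudo-Anosov on $\mathbb{S}^2$ with a stable DA-plug (as in Proposition \ref{cw2expansivos-0}), take a DA-Anosov on $\mathbb{T}^2$ with an unstable plug, and glue them using $\mathbb{S}^2 \# \mathbb{T}^2 = \mathbb{T}^2$. The additional care you devote to verifying $cw2$-expansivity across the gluing annulus is more than the paper itself records, but it is the same argument the paper implicitly delegates to \cite{artiguepacificovieitez}.
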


\begin{proof} To construct these examples, we take the modified pseudo-Anosov with a stable DA-plug on the sphere $\mathbb{S}^2$ as in the proof of the previous proposition, denoted by $f_1$. On the other hand, let $f_2$ be a $DA$-Anosov with an unstable plug on the torus $\mathbb{T}^2$. Since $\mathbb{S}^2 \# \mathbb{T}^2= \mathbb{T}^2$, we can glue the plugs together, thus we obtain $ f:=f_1 \# f_2 :\mathbb{T}^2 \longrightarrow \mathbb{T}^2$ is a $cw2$-expansive homeomorphism with 1-prongs.
\end{proof}

The modifications mentioned above, introduced by Artigue in \cite{artiguerobustly} on the plug gluing annulus yield an $r$-expansive homeomorphism that is not $(r-1)$-expansive on a compact surface (Theorem \ref{robustez}). These can now be applied to examples on the sphere (Proposition \ref{cw2expansivos-0}) or on examples on the torus (Proposition \ref{cw2expansivos-1}). Consequently, we obtain new examples of $cwN$-expansive homeomorphisms which are not cw(N-1)-expansive on the sphere and torus.

\subsection{Proof of Theorem \ref{thprincipal}}

As initially indicated, the construction of the annomalus examples follows the construction of Artigue \cite{artigueanomalous}. Next, we will describe the fundamental steps of Artigue's annomalus plug, and the proof basically shows that we can always add this plug to previously constructed examples.

Let be a linear transformation $T_1: \mathbb{R}^2\longrightarrow \mathbb{R}^2$, $\displaystyle T_1(x,y)=\left(\frac{x}{2}, \frac{y}{2}\right)$. This defines the non-locally connected continuum $E$:

\vspace{0.5cm}

$C(a) = \{(a,y) \in \mathbb{R}^2 : 0 \leq y \leq a\} \ \text{ for } a > 0,$

$D_1 = \bigcup_{i=1}^{\infty} C\!\left(\tfrac{1}{2} + \tfrac{1}{2^i}\right),$

$D_{n+1} = T_1(D_n) \quad \text{for all } n \geq 1,$

$D = \bigcup_{n \geq 1} D_n.$

\vspace{0.5cm}

The set $E = D \cup ([0,1] \times \{0\})$ is \textit{connected but not locally connected} (see Figure 2 in Artigue \cite{artigueanomalous}). To insert the set $E$ into the dynamics of a plane homeomorphism, the piecewise linear transformation $T:\mathbb{R}^2 \longrightarrow \mathbb{R}^2$ is defined.

$$T(x,y) = \left\{%
\begin{array}{lcl}
(\frac{x}{2}, \frac{y}{2}) & if  & x\geq y\geq 0, \\
(\frac{x}{2}, 2y) & if  & x\leq 0~\text{or}~y\geq 0, \\
(\frac{x}{2}, \frac{4y-3x}{2}) & if  & y\geq x \geq 0. \\
\end{array}%
\right.$$

The vertical vector field $X(p) = (0, \rho(p))$ is defined, where $\rho(p):=dist(p,E)$ denotes the distance from $p$ to $E$. The vector field $X$ has a complete flow $\phi:\mathbb{R} \times \mathbb{R}^2 \longrightarrow \mathbb{R}^2 $.

Let $g: [0,1] \times \mathbb{R} \longrightarrow [0,1] \times \mathbb{R}$ be the homeomorphism

$$g(x,y) = \left\{%
\begin{array}{lcl}
\phi^1\circ T(x,y) & if  & y\geq 0, \\
T(x,y) & if  & y<0. \\
\end{array}%
\right. ,$$
where $\phi^1$ is a time one homeomorphism associated to the vector field $X$.

The homeomorphism $g$  commutes with $T$ and preserves the vertical foliation on $\mathbb{R}^2$. The stable set of the origem $W^s(0,g)$ satifies 

$$W^s_g(0) ~ \cap ~([0,1] \times [0,1]) =  E.$$ 

Let us consider the rectangle ${\cal Q}\subset \mathbb{R}^2$ of vertices $(0,-4)$, $(-2,-4)$, $(2,4)$ and $(0,4)$, we note that $g({\cal Q})$ is a trapeze of vertices $(0,-8)$, $(-1,-8)$, $(1,5)$ and $(0.8)$. Let's call ${\cal Q}\cup g({\cal Q})$ anomalous plug, see Figure (\ref{anomalous plug}.a).

\begin{teorema} \label{thprincipal}
 Every compact surface ($g \ge 0$) admits a cw-expansive homeomorphism with a connected stable set but non-locally connected.
\end{teorema}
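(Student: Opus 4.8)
The plan is to combine the three families of examples already constructed (the $2$-expansive homeomorphisms on $\Sigma_g$ for $g\geq 2$ from Theorem \ref{2expansivos}, the $cw2$-expansive example on $\mathbb{S}^2$ from Proposition \ref{cw2expansivos-0}, and the $cw2$-expansive example on $\mathbb{T}^2$ from Proposition \ref{cw2expansivos-1}) with the anomalous plug $\mathcal{Q}\cup g(\mathcal{Q})$ just described. The key observation making this work is that all of these examples contain a fixed (or periodic) saddle point whose local stable leaf is a vertical segment along which the map acts as a contraction, exactly as in the domain $[0,1]\times\mathbb{R}$ where the anomalous homeomorphism $g$ lives. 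So the whole proof reduces to: (i) locating such a saddle in each example; (ii) grafting $g$ into a neighborhood of it via a chart that respects the vertical foliation; (iii) checking that the surgery changes neither the surface's homeomorphism type nor the $cw$-expansivity, while creating a point whose local stable set is $E$ — connected but not locally connected.

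\textbf{Step 1: locate the saddle and prepare the chart.} In each of the three base examples there is a hyperbolic periodic saddle $s$ of some period $k$ whose stable and unstable separatrices are dense in the ambient surface (for $\Sigma_g$ this is a saddle of the DA-plug or one of the $p_j,q_j$; for $\mathbb{S}^2$ and $\mathbb{T}^2$ it is the periodic point $p_0$ used in the gluing, or a nearby one). Passing to $f^k$ (legitimate by Lemmas \ref{iteradacwexpansivos} and \ref{iteradacw2expansivos}, which preserve $cw$- and $cwN$-expansivity) we may assume $s$ is fixed. After a change of coordinates straightening the local stable foliation to vertical lines, a neighborhood of $s$ is identified with a subrectangle of $[0,1]\times\mathbb{R}$ in such a way that $f$ coincides with the linear saddle behaviour outside a small disk, matching the form of $T$ away from $E$. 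This is precisely the local model for which $g$ was built: $g$ commutes with $T$, preserves the vertical foliation, and equals $T$ for $y<0$.

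\textbf{Step 2: perform the surgery and identify the stable set.} Replace $f$ on the prepared neighborhood by $g$ (conjugated through the chart), leaving $f$ unchanged elsewhere; since $g$ agrees with $T$ off a compact set and $T$ is the local form of $f$, the two maps glue to a genuine homeomorphism $\hat f$ of the same surface. The anomalous plug $\mathcal{Q}\cup g(\mathcal{Q})$ is absorbed into this neighborhood and the surgery does not alter the topological type of the surface. By the stated property $W^s_g(0)\cap([0,1]\times[0,1])=E$, the local stable set of the point corresponding to $s$ (now the origin of the plug) is the continuum $E$, which is connected but not locally connected.

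\textbf{Step 3: preserve $cw$-expansivity.} Finally, argue that $\hat f$ is still $cw$-expansive. Here one uses that $cw2$-expansive (hence $cw_F$-, hence $cw$-) is the relevant conclusion: the base maps are $cw2$-expansive, and the modification is localized along a single stable leaf where $g$ still contracts (or along a single unstable leaf if an unstable-type saddle was used), so any continuum with bounded orbit under $\hat f$ must, by the hyperbolic behaviour away from the plug together with the foliation-preserving nature of $g$, collapse to the same kind of small $\cap$-intersections as before; no new nontrivial bounded-orbit continua are created by inserting $g$. I expect \emph{this last step} to be the main obstacle: showing that the non-contracting or delicate behaviour of $g$ near $E$ (where $\phi^1$ pushes points along the vertical field) does not manufacture a nontrivial continuum that stays small under all iterates, i.e. that $cw$-expansivity genuinely survives the graft. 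The argument should mirror the verification of $cw$-expansivity for Artigue's original anomalous example \cite{artigueanomalous}, adapted to the present ambient dynamics, using that the plug sits inside a region swept by dense stable and unstable separatrices of the surrounding hyperbolic set.
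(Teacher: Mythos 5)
Your proposal follows essentially the same route as the paper: start from one of the examples of Theorem \ref{2expansivos} or Propositions \ref{cw2expansivos-0}, \ref{cw2expansivos-1}, graft Artigue's anomalous plug ${\cal Q}\cup g({\cal Q})$ into a rectangle around a stable separatrix of a saddle via a boundary-matching conjugacy, and defer the verification that cw-expansivity survives to the argument of \cite{artigueanomalous}; the paper is no more detailed than you are on that last step, so your Step 3 is at the level of rigor actually used. The one point where your setup drifts from what is done, and which matters, is the choice of insertion point. The paper does not place the plug at a periodic saddle of the hyperbolic set with dense stable and unstable separatrices (your stated criterion in Step 1); it places it at the saddle $p_1$ created by the stable DA-plug, and the rectangle ${\cal R}_{p_1}$ sits along the \emph{compact} arc $W^s(p_1,f_1)\subset W^u(p_0,f_1)$ joining the source $p_0$ to $p_1$ --- that is, inside the gap, a wandering region where the dynamics is a simple source-to-saddle transit. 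This is precisely what makes the surgery benign: the continuum $E$ converges to $p_1$ under forward iteration and escapes into a gap of the unstable component $S_2$ under backward iteration, so the modified region meets the hyperbolic sets only through the gluing annulus, where Artigue's analysis of stable arcs versus unstable circle arcs applies. If you instead insert the plug at a saddle whose separatrices are dense in the surface, you are modifying the map on (or arbitrarily near) the attractor itself, the orbit of the plug accumulates on itself, and neither your Step 2 gluing nor the appeal to Artigue's cw-expansivity argument goes through as stated. With the insertion point fixed to be $p_1$ of the stable DA-plug, your outline matches the paper's proof.
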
 

\begin{proof} 
In any of the examples $(\Sigma_k,f)$, $k\geq 0$, of the Propositions (\ref{2expansivos}), (\ref{cw2expansivos-0}) or (\ref{cw2expansivos-1}), we fix one of the tori (or sphere) with a stable DA-plug, let's denote this component of $\Sigma_k$ by $(S_1,f_1)$, with the notation fixed in these examples we have that in the stable DA-plug two fixed points $p_1$ and $p_2$ of saddle type were inserted and $p_0$ a source, let's fix one of the saddles, say $p_1$, then one of the components $W^s(p_1, f_1)\subset W^u(p_0, f_1)$ connects from $p_0$ to $p_1$.

Denote by ${\cal R}_{p_1}\subset S_1$ a topological rectangle illustrated in Figure (\ref{anomalous plug}.b). Then just replace $ {\cal R}_{p_1}$ with anomalous plug ${\cal Q}$. For this purpose, consider the homeomorphism $h: {\cal R}_{p_1} \cup f({\cal R}_{p_1})\rightarrow
{\cal Q} \cup g({\cal Q})$ such that $f_1(x) = h^{-1} \circ g \circ h(x)$ for all $x \in \partial {\cal R}_{p_1}$. Defines the homeomorphism $F_1: S_1 \rightarrow S_1$ by

$$
F_1(x) = 
  \begin{cases}
    f_1(x), & \text{if} \; x \notin \mathcal{R}_{p_1}, \\
    h^{-1} \circ g \circ h(x) , & \text{if} \; x \in \mathcal{R}_{p_1}. 
  \end{cases}
$$

In this way we obtain a homeomorfism $F_1$ that we can call {\em stable DA-plug with an anomalous saddle}. 

Now, using the Proposition (\ref{Gluing DA-plugs}), we proceed to glue the corresponding components of $\Sigma_k$ onto $(S_1,F_1)$. This gives us the homeomorphism $f_k:\Sigma_k \longrightarrow \Sigma_k$. Let $A$ denote the gluing annulus of the stable DA-plug with an anomalous saddle of $S_1$ with an unstable DA-plug, say, $S_2 \setminus D \subset \Sigma_k$ ($D$ disk centered on the fixed point $q_0$ attractor, removed from $S_2$ to perform the gluing of plugs).

In this construction we can see that the dynamics of the homeomorphism $f_k$ on the components of $\Sigma_k$, with the exception of the component corresponding to $S_2$, are the same as those of $f$, that is, the non-wandering sets are expansive and dynamically isolated. In the component corresponding to $S_2$ we have the unstable DA-plug that will bond with the stable DA-plug with an anomalous saddle of $S_1$, eventually $S_2$ may have a finite number of other unstable DA-plugs, thus $S_2$ is invariant under iterations $f_k^{-i}$, $i\geq 0$. Thus we still have set $\widehat{\Lambda}_2 \subset S_2 $ which is expanding attractor of topological dimension one to $f_k^{-1}$ and $W^u(\widehat{\Lambda}_2)$ is dense in $S_2$, we know from the gluing of plugs that this foliation restricted to the annulus $A$ its unstable leaves are circle arcs, as Figure (\ref{anel}.a). The stable sets make a cw-foliations (see Artigue \cite{dentritations}) as Figure (\ref{anel}.b). The set $E$, non-locally connected, converges to the fixed point $p_1$ for future iterations; for negative iterations, it will enter a gap at $S_2$, becoming stuck at $S_2$.

As was well pointed out in Artigue's proof of theorem 2.5 in \cite{artigueanomalous}, we may eventually have that a stable arc in $A\setminus E$ may contain a circle arc of the unstable foliations. The same arguments from Artigue's proof apply in this situation, so we can denote by $f_k:\Sigma_k \longrightarrow \Sigma_k$ the $cw$-expansive homeomorphism having a fixed point whose local stable set is connected but it is not locally connected.
\end{proof} 

\begin{figure}[ht]
  \centering
   \includegraphics[width=0.75\linewidth]{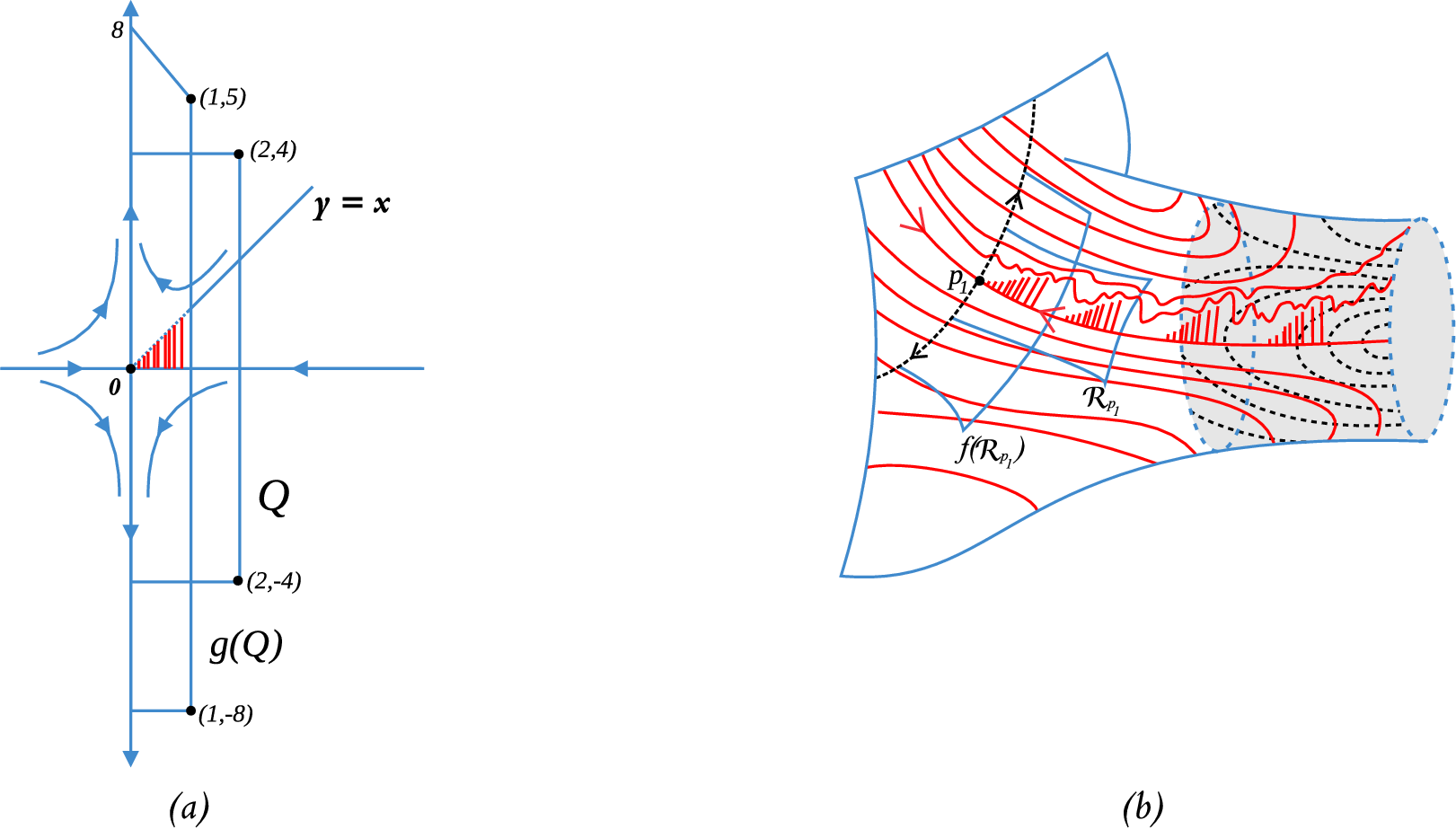}
   \caption{Anomalous plug.}
   \label{anomalous plug}
\end{figure}

\newpage

\noindent\textbf{Acknowledgments:} \noindent We are grateful to Rodrigo Arruda for his careful reading and comments.

\bibliography{bibliografia}

@article{artigueanomalous,
  author  = {Artigue, A.},
  title   = {Anomalous cw-expansive surface homeomorphisms},
  journal = {Discrete Contin. Dyn. Syst.},
  volume  = {36},
  number  = {7},
  pages   = {3511--3518},
  year    = {2016}
}

@article{artiguepacificovieitez,
  author  = {Artigue, A. and Pacifico, M. J. and Vieitez, J. L.},
  title   = {$N$-expansive homeomorphisms on surfaces},
  journal = {Commun. Contemp. Math.},
  volume  = {19},
  number  = {6},
  year    = {2017}
}

@article{artiguerobustly,
  author  = {Artigue, A.},
  title   = {Robustly $N$-expansive surface diffeomorphisms},
  journal = {Discrete Contin. Dyn. Syst.},
  volume  = {36},
  number  = {6},
  pages   = {2873--2892},
  year    = {2016}
}

@article{katocw,
  author  = {Kato, H.},
  title   = {Continuum-wise expansive homeomorphisms},
  journal = {Canad. J. Math.},
  volume  = {45},
  number  = {3},
  pages   = {576--598},
  year    = {1993}
}

@article{dentritations,
  author  = {Artigue, A.},
  title   = {Dendritations of surfaces},
  journal = {Ergodic Theory Dynam. Systems},
  volume  = {37},
  number  = {8},
  pages   = {2497--2516},
  year    = {2017}
}

@article{mane1979,
  author  = {Ma{\~n}e, R.},
  title   = {Expansive homeomorphisms and topological dimension},
  journal = {Trans. Amer. Math. Soc.},
  volume  = {252},
  pages   = {313--319},
  year    = {1979}
}

@article{hiraide,
  author  = {Hiraide, K.},
  title   = {Expansive homeomorphisms of compact surfaces are pseudo-{A}nosov},
  journal = {Osaka J. Math.},
  volume  = {27},
  number  = {1},
  pages   = {117--162},
  year    = {1990}
}

@article{lewowicz,
  author  = {Lewowicz, J.},
  title   = {Expansive homeomorphisms of surfaces},
  journal = {Bol. Soc. Brasil. Mat.},
  volume  = {20},
  number  = {2},
  pages   = {113--133},
  year    = {1989}
}

@article{morales,
  author  = {Morales, C.},
  title   = {A generalization of expansivity},
  journal = {Discrete Contin. Dyn. Syst.},
  volume  = {32},
  number  = {1},
  pages   = {293--301},
  year    = {2012}
}

@article{arrudacarvalhosarmiento,
  author  = {Arruda, R. and Carvalho, B. and Sarmiento, A.},
  title   = {Continuum-wise hyperbolic homeomorphisms on surfaces},
  journal = {Discrete Contin. Dyn. Syst.},
  volume  = {44},
  number  = {3},
  pages   = {768--790},
  year    = {2024}
}

@article{smale1967,
  author  = {Smale, S.},
  title   = {Differentiable dynamical systems},
  journal = {Bull. Amer. Math. Soc.},
  volume  = {73},
  pages   = {747–817},
  year    = {1967}
}

@article{utz,
  author  = {Utz, W. R.},
  title   = {Unstable homeomorphisms},
  journal = {Proc. Amer. Math. Soc.},
  volume  = {1},
  pages   = {769–774},
  year    = {1950}
}

@book{robinsonbook,
  author    = {Robinson, C.},
  title     = {Dynamical Systems: Stability, Symbolic Dynamics, and Chaos},
  publisher = {CRC Press},
  address   = {Boca Raton},
  year      = {1995}
}

@book{walters,
  author    = {Walters, P.},
  title     = {An Introduction to Ergodic Theory},
  series    = {Graduate Texts in Mathematics},
  volume    = {79},
  publisher = {Springer-Verlag},
  address   = {New York},
  year      = {1982}
}

@book{aoki1994topological,
  author    = {Aoki, N. and Hiraide, K.},
  title     = {Topological Theory of Dynamical Systems},
  publisher = {North-Holland},
  address   = {Amsterdam},
  year      = {1994}
}

@book{palis1982geometric,
  author    = {Palis, J. and de Melo, W.},
  title     = {Geometric Theory of Dynamical Systems: An Introduction},
  publisher = {Springer-Verlag},
  address   = {New York},
  year      = {1982}
}

@book{shub1987global,
  author    = {Shub, M.},
  title     = {Global Stability of Dynamical Systems},
  publisher = {Springer-Verlag},
  address   = {New York},
  year      = {1987}
}

@book{katok1995introduction,
  author    = {Katok, A. and Hasselblatt, B.},
  title     = {Introduction to the Modern Theory of Dynamical Systems},
  series    = {Encyclopedia of Mathematics and its Applications},
  volume    = {54},
  publisher = {Cambridge University Press},
  address   = {Cambridge},
  year      = {1995}
}

\vspace{2cm }

\noindent \textit{A. Sarmiento}\\
Departamento de Matemática\\
Universidade Federal de Minas Gerais - UFMG\\
Av. Antônio Carlos, 6627 - Campus Pampulha\\
Belo Horizonte - MG, Brasil\\
e-mail: \textit{sarmiento@mat.ufmg.br}

\vspace{1cm}

\noindent \textit{D. Danton}\\
Instituto Federal do Norte de Minas Gerais - IFNMG\\
Rodovia BR 367, Km 07, s/n - Zona Rural\\
Almenara - MG, Brasil\\
e-mail: \textit{douglas.nepomuceno@ifnmg.edu.br}

\vspace{1cm}

\noindent \textit{V. Valério}\\
Departamento de Matemática e Estatística\\
Universidade Federal de São João del-Rei - UFSJ\\
Praça Frei Orlando, 170, Centro\\
São João del-Rei - MG, Brasil\\
e-mail: \textit{vivipardini@ufsj.edu.br}

\end{document}